\newcommand\Prob{{\mathbb P}}
\newcommand\E{{\mathbb E}}
\newcommand\V{{\rm Var}}
\newcommand\cov{{\rm Cov}}
\newtheorem{theorem}{Theorem}
\newtheorem{lemma}{Lemma}
\newtheorem{proposition}{Proposition}
\newcommand\inprob{\buildrel {P} \over  \longrightarrow}
\newcommand\indist{\buildrel {D} \over \longrightarrow }
\DeclareRobustCommand{\stirling}{\genfrac\{\}{0pt}{}}
\title{Limit Laws for the Generalized Zagreb Indices of Random Graphs}
\author{Qunqiang Feng{\thanks{Email: fengqq@ustc.edu.cn}, Hongpeng Ren, Yaru Tian}\\
{\small Department of Statistics and Finance, School of Management} \\
{\small University of Science and Technology of China}\\
{\small Hefei 230026, China}}
\date{}
\begin{document}
\maketitle

\begin{abstract}
In this paper, we study the limiting behavior of the generalized Zagreb indices of 
the classical Erd\H{o}s-R\'{e}nyi (ER) random graph $G(n,p)$, as $n\to\infty$. 
For any integer $k\ge1$, 
we first give an expression for the $k$-th order generalized Zagreb index  
in terms of the number of star graphs of various sizes in any simple graph. 
The explicit formulas for the first two moments of the generalized Zagreb 
indices of an ER random graph are then obtained by this expression.
Based on the asymptotic normality of the numbers of star graphs of various sizes,
several joint limit laws are established for a finite number of
generalized Zagreb indices with a phase transition for $p$ in different regimes.  
Finally, we provide a necessary and sufficient condition for 
any single generalized Zagreb index of $G(n,p)$ to be asymptotic normal. 

\bigskip
\noindent{\it Keywords:} Erd\H{o}s-R\'{e}nyi Random graph; topological index;  
Stirling number; star graph; dependency graph

\noindent{\it AMS 2020 Subject Classification}: Primary
   05C80,    
   60C05     
   secondary
   60F05   
\end{abstract}

\section{Introduction}

Topological indices, which are real numbers that represent the topology of (molecular) graphs,
play a significant role in mathematical chemistry, especially in the study of quantitative structure-property 
relationship (QSPR) and quantitative structure-activity relationship (QSAR)
\cite{dev1999top,estrada2022stat}. 
The well-known {\em Zagreb index}, the first degree-based topological index \cite{gutman2013degree}, 
was originally proposed by Gutman and Trinajsti\'{c} more than 
50 years ago  \cite{gutman1972graph}.   
This index, defined as the sum of squares of the degrees of vertices in a graph,
has been utilized to discuss the QSAR/QSPR of the different chemical structures such as chirality, 
complexity, hetero-system, ZE-isomers and $\pi$-electron energy \cite{diudea2001qsar}.
For more details of the mathematical properties and chemical applications of the Zagreb index, 
see, e.g., \cite{gutman2004zagreb,das2015zagreb,wang2016sum,horoldagva2021zagreb} and the references therein.

Let $G=(V,E)$ be a simple graph with the vertex set $V$ and the edge set $E$.
For any vertex $v\in V$, let $d(v)$ denote the degree of $v$, i.e.,
the number of vertices adjacent to $v$.
Then, the most straightforward modification of the Zagreb index is to introduce a variable parameter in its definition:  
\begin{align*}
    Z_{G}^{(k)}=\sum_{v\in V} [d(v)]^{k},
\end{align*}
where $k$ is an arbitrary real number.
This generalization of the Zagreb index was first considered in \cite{li2004tree}, 
and then followed by many other authors 
(see, e.g., \cite{zhang2006unicyclic,li2005unified,gutman2014exceptional,bedratyuk2018,gutman2020beyond}).
We refer to an excellent survey \cite{ali2018sum} 
for relations between the generalized Zagreb indices and other topological indices. 
These generalized indices have also been the subject of considerable attention in graph theory
(see, e.g., \cite{Cheng2024fisrt,Cioaba2006sum}). 
For the sake of simplicity, we only consider the case in which $k$ 
is a positive integer in this work.  
When $k=1$, the index $Z_G^{(1)}$ is the total sum of degrees of vertices in $G$.
Then, from the basic knowledge in graph theory,
it is equal to twice the number of edges in $G$.
The case of ordinary Zagreb index is $k=2$. Another important special case is $k=3$, 
where $Z_{G}^{(3)}$ is referred to as the {\em forgotten index} in the literature for historical reasons
(see, e.g., \cite{furtul2015forgotten,jahanbani2021forgortten}).

In the study of random graphs or network data analysis, 
one of the fundamental concepts is the degree distribution (see, e.g., \cite{hofstad2016random}). 
Consider an observed graph on finite vertices generated from a random graph model, 
where the number of vertices could grow to infinity.
From a statistical point of view, 
then the $k$-th order generalized Zagreb index of this graph 
divided by the total number of vertices, 
is an empirical estimation of the $k$-th moment of the 
asymptotic degree distribution of the underlying graph model.
Consequently, the generalized Zagreb indices of various random graph models 
are of their own interest in probability theory and statistics despite of their chemical background.  

The Erd\H{o}s-R\'{e}nyi (ER) random graphs are possibly 
the most extensively studied random graph model in the literature.
We refer the reader to the monographs \cite{janson2000rg,bollobas2001random,hofstad2016random} 
for the enormous results on this classical model.
In this work, we aim to derive the asymptotic properties of
the generalized Zagreb indices of ER random graphs,
as the number of vertices grows to infinity.
In \cite{feng2013zagreb}, several limit laws have been established for 
the ordinary Zagreb index of the ER random graphs, 
as well as for another type of Zagreb index.
Recently, the expectation of the generalized Zagreb index 
of any positive integer order is obtained in \cite{doslic2020general},
also for the ER random graphs. 
For some recent developments of other topological indices with chemical backgrounds in 
the field of random graphs, 
see, for example, \cite{yuan2023asymp, yuan2023randic}.

Throughout this paper, we shall use the following notation.
For an event ${\cal E}$, let $|{\cal E}|$ be the cardinality, 
and $I({\cal E})$ the indicator of ${\cal E}$. 
Let ${\rm Poi}(\lambda)$ denote by the Poisson distribution 
with parameter $\lambda>0$.
For two sequences of positive numbers $a_n$ and $b_n$,
we write $a_n\sim b_n$ for $a_n/b_n\to 1$, $a_n=o(b_n)$  for $a_n/b_n\to 0$, and $a_n=O(b_n)$ for that
$a_n/b_n$ is bounded, as $n\to\infty$.  
For probabilistic convergence, 
we use $\indist$ and $\inprob$ to denote convergence in distribution
and in probability, respectively.

The rest of this paper is organized as follows. 
The generalized Zagreb indices of ER random graphs are formally defined  
in Section 2, and reformulated in terms of the number of star graphs of 
various sizes. Through this reformulation, 
in Section 3 we obtain the first two moments of these indices,
as well as several results of weak convergence.  
In Section 4, under different conditions we establish the joint asymptotic normality of
the first $k$-th order generalized Zagreb indices for any $k\ge2$,
and then provide a necessary and sufficient condition for the $k$-th order generalized Zagreb index 
for any $k\ge1$.

\section{Generalized Zagreb Index and Star Graphs}

Let us denote by $G(n,p)$ an ER random graph on the 
vertex set $\{1,2,\cdots,n\}$, where each possible edge exists independently
with probability $0<p<1$.  
Typically, we consider $p=p(n)$ as a function of $n$ in this work.
For any pair of vertices $(i,j)$, let $I_{ij}$ denote the indicator of  
the event that there exists an edge between $i$ and $j$ in $G(n,p)$. 
Then $I_{ii}=0$, $I_{ij}=I_{ji}$, and $\{I_{ij},1\le i<j\le n\}$
is a sequence of independent Bernoulli variables with common success rate $p$.
For any integer $k\ge 1$, then
the $k$-th order generalized Zagreb index of $G(n,p)$ is formally defined as
\begin{align}\label{defzagreb}
    Z_{n}^{(k)}=\sum_{i=1}^n\bigg(\sum_{j=1}^n I_{ij}\bigg)^k,
\end{align}
where the term $\sum_{j=1}^n I_{ij}$ stands for the degree of vertex $i$.
In particular, the first three indices
$Z_{n}^{(1)},Z_{n}^{(2)}$ and $Z_{n}^{(3)}$ are total sum of vertex degrees,
the ordinary Zagreb index and the forgotten index of $G(n,p)$, respectively. 

For a positive integer $m$, a {\em star graph} of size $m$,  
sometimes simply known as an {\em $m$-star},
is a tree on $m$ vertices with one vertex having degree $m-1$ and the other $m-1$ having degree 1 (see, e.g., \cite{tutte2005graph}). 
In a star graph, we call the vertex with highest degree the {\em center}.
As a special case, a 2-star may have two different centers, 
since both of its vertices have the common degree 1. 
Here, we may distinguish two 2-stars that are defined on the same pair of vertices but have different centers.
As a result, in any graph, the number of 2-stars is equal to twice the number of edges. 
For another special case,  the number of 3-stars corresponds to the number of paths of length two (or wedges) in a graph.
Let $S_{m,n}^{(i)}$ denote the number of $m$-stars 
with the center at vertex $i$ in $G(n,p)$.
That is, for any $1\le i\le n$,
\begin{align}\label{smni}
S_{m,n}^{(i)}=\sum_{1\le j_1<\cdots<j_{m-1}\le n}I_{ij_1}\cdots I_{ij_{m-1}}, \quad m\ge2.
\end{align}
Then, for any integer $m\ge2$, the sum 
\begin{align}\label{smn}
S_{m,n}=\sum_{i=1}^nS_{m,n}^{(i)}
\end{align}
denotes the number of $m$-stars in $G(n,p)$.

The Stirling numbers of the second kind, 
denoted by $\stirling{n}{k}$, are the
numbers of partitions of a set of $n$ elements into $k$ nonempty classes (see, e.g., \cite{wilf2005gf}). 
For any two non-negative integers $n\ge k$, the linear recurrence relation
\begin{equation}\label{stirlingrec}
    \stirling{n}{k}=k\stirling{n-1}{k}+\stirling{n-1}{k-1}
\end{equation}
holds, where, by convention,
\begin{equation*}
    \stirling{0}{0}=1\quad\mbox{and}\quad \stirling{n}{0}=0, \quad n\ge1.
\end{equation*}

The following proposition plays an important role in deriving the asymptotic 
behavior of the generalized Zagreb indices of $G(n,p)$, 
as well as their first two moments. 

\begin{proposition}\label{propstar}
For any $k\ge 1$, the $k$-th order generalized Zagreb index of $G(n,p)$ can be expressed as 
\begin{align}\label{znkstar}
Z_{n}^{(k)}=\sum_{m=1}^k m!\stirling{k}{m}S_{m+1,n}.
\end{align}
\end{proposition}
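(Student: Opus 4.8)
The plan is to start from the definition \eqref{defzagreb} and recognize the inner power $\big(\sum_{j=1}^n I_{ij}\big)^k$ as the $k$-th power of the degree $d(i)$ of vertex $i$, then apply the classical identity expressing ordinary powers in terms of falling factorials via Stirling numbers of the second kind. Concretely, for any non-negative integer $x$ one has $x^k=\sum_{m=0}^k \stirling{k}{m} (x)_m$, where $(x)_m = x(x-1)\cdots(x-m+1)$ is the falling factorial; since $\stirling{k}{0}=0$ for $k\ge 1$, the sum effectively runs from $m=1$ to $k$. Substituting $x=d(i)=\sum_{j=1}^n I_{ij}$ and summing over $i$ gives
\begin{align*}
Z_n^{(k)}=\sum_{i=1}^n [d(i)]^k=\sum_{i=1}^n\sum_{m=1}^k\stirling{k}{m}(d(i))_m=\sum_{m=1}^k\stirling{k}{m}\sum_{i=1}^n (d(i))_m.
\end{align*}

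The next step is to identify $\sum_{i=1}^n (d(i))_m$ with a count of stars. The key combinatorial observation is that $(d(i))_m = d(i)(d(i)-1)\cdots(d(i)-m+1)$ counts the number of ordered $m$-tuples of distinct neighbours of vertex $i$, which equals $m!$ times the number of unordered $m$-subsets of neighbours of $i$. An unordered $m$-subset of neighbours of $i$, together with $i$ as center, is precisely an $(m+1)$-star centered at $i$; in the notation of \eqref{smni} this count is $S_{m+1,n}^{(i)}$. Hence $(d(i))_m = m!\, S_{m+1,n}^{(i)}$, and summing over $i$ using \eqref{smn} yields $\sum_{i=1}^n (d(i))_m = m!\, S_{m+1,n}$. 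I should double-check the boundary case $m=1$: here $(d(i))_1 = d(i)$, and indeed $S_{2,n}^{(i)}$ is the number of $2$-stars centered at $i$, which by the convention in the text (distinguishing centers) equals $d(i)$, and $1!=1$, so the formula is consistent; summing gives $S_{2,n}=2|E|=Z_n^{(1)}$ as expected.

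Plugging $\sum_{i=1}^n (d(i))_m = m!\, S_{m+1,n}$ into the displayed expansion of $Z_n^{(k)}$ gives exactly
\begin{align*}
Z_n^{(k)}=\sum_{m=1}^k m!\stirling{k}{m} S_{m+1,n},
\end{align*}
which is \eqref{znkstar}. I do not anticipate a genuine obstacle here; the only point requiring care is the bookkeeping in the bijection between ordered neighbour-tuples and stars — in particular making sure the ``doubled center'' convention for $2$-stars is handled consistently so that the $m=1$ term comes out right — and confirming that the identity $x^k=\sum_m\stirling{k}{m}(x)_m$ is applied only for the non-negative integer values actually taken by $d(i)$, so no issue of validity arises. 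One could alternatively verify the identity purely algebraically by substituting \eqref{smni}–\eqref{smn} into the right-hand side and re-summing, using the multinomial structure of $\big(\sum_j I_{ij}\big)^k$ and $I_{ij}^2=I_{ij}$, but the Stirling-number route above is cleaner and makes the combinatorial meaning transparent.
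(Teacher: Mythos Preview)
Your proof is correct, but it follows a different (and slightly shorter) route than the paper's. The paper establishes the per-vertex identity
\[
\Big(\sum_{j}I_{ij}\Big)^{k}=\sum_{m=1}^{k} m!\stirling{k}{m} S_{m+1,n}^{(i)}
\]
by induction on $k$, multiplying the $k$-th power by $\sum_j I_{ij}$, splitting that sum over $j\in\{j_1,\dots,j_m\}$ versus $j\notin\{j_1,\dots,j_m\}$, and then invoking the Stirling recurrence \eqref{stirlingrec}. You instead quote the classical polynomial identity $x^{k}=\sum_{m}\stirling{k}{m}(x)_m$ directly, evaluate it at the integer $x=d(i)$, and then use the one-line combinatorial fact $(d(i))_m=m!\binom{d(i)}{m}=m!\,S_{m+1,n}^{(i)}$. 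Your argument is cleaner and makes the role of the falling factorials transparent; the paper's inductive argument is a bit longer but fully self-contained (it effectively re-derives the Stirling identity in situ rather than citing it). Both then finish identically by summing over $i$ via \eqref{smn}.
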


\begin{proof}
To prove Proposition \ref{propstar}, 
we first show that in a simple graph,
the $k$-th power of the degree of any given vertex $i$
can be expressed in terms of the number of $m$-stars 
with center $i$ for all $2\le m\le k+1$. 
More precisely,  
\begin{equation}\label{kthpowdegree}
    \bigg(\sum_{j=1}^n I_{ij}\bigg)^{k}=\sum_{m=1}^k m!\stirling{k}{m}
    S_{m+1,n}^{(i)},\quad k\ge1.
\end{equation} 
Indeed, we can prove \eqref{kthpowdegree} by induction on $k\ge1$. 
For $k=1$, the both sides of \eqref{kthpowdegree} are the degree of vertex $i$, 
which initializes the induction hypothesis.
To advance the induction hypothesis, 
we suppose that \eqref{kthpowdegree} holds for $k$. 
Recall that $\stirling{k}{0}=0$ and $\stirling{k}{k}=1$ for any $k\ge1$.
Taking into account the number of repetitions, for any given vertex $i$ in $G(n,p)$, we have
\begin{align*}
\sum_{1\le j_1<\dots<j_m\le n}I_{ij_1}\cdots I_{ij_m}\sum_{j\in\{j_1,\cdots,j_m\}} I_{ij}=mS_{m+1,n}^{(i)},
\end{align*}
and 
\begin{align*}
\sum_{1\le j_1<\dots<j_m\le n}I_{ij_1}\cdots I_{ij_m}\sum_{j\notin\{j_1,\cdots,j_m\}} I_{ij}
=\binom{m+1}{m}S_{m+2,n}^{(i)}=(m+1)S_{m+2,n}^{(i)}.
\end{align*}
Then,  by \eqref{smni}, \eqref{stirlingrec} and the induction hypothesis, we obtain that 
\begin{align*}
    \bigg(\sum_{j=1}^n I_{ij}\bigg)^{k+1}
    &=\sum_{m=1}^k m!\stirling{k}{m}S_{m+1,n}^{(i)}\cdot\bigg(\sum_{j=1}^n I_{ij}\bigg)\\
    &=\sum_{m=1}^k m!\stirling{k}{m}\sum_{1\le j_1<\dots<j_m\le n}I_{ij_1}\cdots I_{ij_m}\bigg(\sum_{j\in\{j_1,\cdots,j_m\}} I_{ij}
    +\sum_{j\notin\{j_1,\cdots,j_m\}} I_{ij}\bigg)\\
    &=\sum_{m=1}^km!\stirling{k}{m}\Big(mS_{m+1,n}^{(i)}
     +(m+1)S_{m+2,n}^{(i)}\Big)\\
    &=\sum_{m=1}^km!\bigg(\stirling{k+1}{m}-\stirling{k}{m-1}\bigg)S_{m+1,n}^{(i)}
    +\sum_{m=1}^k(m+1)!\stirling{k}{m}S_{m+2,n}^{(i)}\\
    &=\sum_{m=1}^{k+1} m!\stirling{k+1}{m}S_{m+1,n}^{(i)},
\end{align*}
which advances the induction, 
and thus \eqref{kthpowdegree} holds for any $k\ge1$. 

Using the relation \eqref{kthpowdegree}, it follows by \eqref{defzagreb} that
\begin{align*}
Z_{n}^{(k)}=\sum_{i=1}^n\sum_{m=1}^k m!\stirling{k}{m}S_{m+1,n}^{(i)}
=\sum_{m=1}^k m!\stirling{k}{m}\sum_{i=1}^n S_{m+1,n}^{(i)},
\end{align*}
which, by \eqref{smn}, 
is equal to the right-hand side of \eqref{znkstar}.
This completes the proof of Proposition \ref{propstar}. 
\end{proof}

The above proposition states that in any simple graph, 
the generalized Zagreb index is a linear combination of the numbers of 
star graphs of various sizes,
in which the coefficients involve the Stirling numbers of the second kind.
Specially, it immediately follows from Proposition \ref{propstar} that 
in a simple graph, the ordinary Zagreb index
\begin{equation*}
    Z_n^{(2)}=S_{2,n}+2S_{3,n},
\end{equation*}
which, in fact, has been shown in \cite{feng2013zagreb}, 
and the forgotten index can be expressed as
\begin{equation*}
    Z_n^{(3)}=S_{2,n}+6S_{3,n}+6S_{4,n}.
\end{equation*}

\section{Mean and Variance}

As previously stated in Section 1, the explicit expression of the mean $\E[Z_n^{(k)}]$ is obtained in \cite{doslic2020general} for any integer $k\ge1$.
This is achieved through an approach using the generating polynomial of the degree sequence \cite{sedghi2008poly}. Nevertheless,
in view of Proposition \ref{propstar}, we can directly derive the first two moments of the generalized Zagreb index of $G(n,p)$,  
by considering the numbers of star graphs of sizes not less than 2.
    
For the expectation of $S_{m+1,n}$, it follows by \eqref{smni} and \eqref{smn}  that
\begin{equation}\label{esmn}
    \E [S_{m+1,n}]=n\binom{n-1}{m}p^{m}, \quad 1\le m\le n-1. 
\end{equation}
Let $m$ and $l$ be two fixed integers satisfying $1\le m, l\le n-1$.
Recalling \eqref{smn}, by symmetry it follows that 
the covariance of $S_{m+1,n}$ and $S_{l+1,n}$ is given by
\begin{align}\label{covsmnsln}
\cov(S_{m+1,n},S_{l+1,n})
&=\cov\bigg(\sum_{i=1}^nS_{m+1,n}^{(i)},\sum_{j=1}^nS_{l+1,n}^{(j)}\bigg)\notag\\
&=n\cov\Big(S_{m+1,n}^{(1)},S_{l+1,n}^{(1)}\Big)
  +n(n-1)\cov\Big(S_{m+1,n}^{(1)},S_{l+1,n}^{(2)}\Big).
\end{align}
We next compute the covariances on the right-hand side of 
\eqref{covsmnsln} separately. 
Consider any two possible star graphs with the common center at vertex 1,
but of sizes $m+1$ and $l+1$, respectively. 
Then the covariance of the two products of indicators $I_{1i_1}\cdots I_{1i_{m}}$
and $I_{1j_1}\cdots I_{1j_{l}}$ depends on the number of elements in 
the intersection of sets $\{i_1,\cdots,i_{m}\}$ and $\{j_1,\cdots,j_{l}\}$.
Specially, if the intersection is empty, these two products are independent. 
Furthermore, if there are exactly $s\ge1$ elements in this intersection, 
\begin{align*}
\cov(I_{1i_1}\cdots I_{1i_{m}},I_{1j_1}\cdots I_{1j_{l}})
=p^{m+l-s}-p^{m+l}=p^{m+l-s}(1-p^s),
\end{align*}
where the integer $s$ ranges from $\max\{1,m+l-n+1\}$ to $\min\{m,l\}$.
This implies that
\begin{align}\label{covs1mns1ln}
\cov(S^{(1)}_{m+1,n},S^{(1)}_{l+1,n})
&=\cov\bigg(\sum_{1\le i_1<\cdots<i_{m}\le n}I_{1i_1}\cdots I_{1i_{m}},\sum_{1\le j_1<\cdots<j_{l}\le n}I_{1j_1}\cdots I_{1j_{l}}\bigg)\notag\\
&=\sum_{s=\max\{1,m+l-n+1\}}^{\min\{m,l\}}\binom{n-1}{s,m-s,l-s,n-m-l+s-1}
p^{m+l-s}(1-p^s),
\end{align}
where the multinomial coefficient
\begin{align*}
  \binom{n-1}{s,m-s,l-s,n-m-l+s-1}=
  \frac{(n-1)!}{s!(m-s)!(l-s)!(n-m-l+s-1)!}.
\end{align*}
Note that two star graphs with different centers have at most a common edge,
which connects their centers. This leads to that
\begin{align}\label{covs1mns2ln}
\cov(S^{(1)}_{m+1,n},S^{(2)}_{l+1,n})
&=\cov\bigg(\sum_{3\le i_2<\cdots<i_{m}\le n}I_{12}I_{1i_2}\cdots I_{1i_{m}},\sum_{3\le j_2<\cdots<j_{l}\le n}I_{21}I_{2j_2}\cdots I_{2j_{l}}\bigg)\notag\\
&=\binom{n-2}{m-1}\binom{n-2}{l-1}\cov\bigg(I_{12}I_{13}\cdots I_{1(m+1)},
I_{21}I_{23}\cdots I_{2(l+1)}\bigg)\notag\\
&=\binom{n-2}{m-1}\binom{n-2}{l-1}p^{m+l-1}(1-p).
\end{align}
Plugging \eqref{covs1mns1ln} and \eqref{covs1mns2ln} into \eqref{covsmnsln},
we thus have
\begin{align}\label{covsmnslnex}
\cov(S_{m+1,n},S_{l+1,n})
&=n\sum_{s=\max\{1,m+l-n+1\}}^{\min\{m,l\}} \binom{n-1}{s,m-s,l-s,n-m-l+s-1}\notag\\
&\quad \cdot p^{m+l-s}(1-p^s) +n(n-1)\binom{n-2}{m-1}\binom{n-2}{l-1}p^{m+l-1}(1-p).
\end{align}

For our purposes in this work, we are more interested in the case where $n$ could grow to infinity.
By \eqref{esmn} and \eqref{covsmnslnex} we have that, as $n\to\infty$, for any $m\ge1$,
\begin{align} \label{esm1nasym}
   \E[S_{m+1,n}]&=\frac{n^{m+1}p^m}{m!}\Big(1+O\Big(\frac1n\Big)\Big), 
\end{align}
and for any $m,l\ge1$,
\begin{align}\label{covsm1nsl1nasym}
\cov\big(S_{m+1,n},S_{l+1,n}\big)
=n\bigg(\frac{(np)^{m+l-1}(1-p)}{(m-1)!(l-1)!}+\sum_{s=1}^{\min\{m,l\}}\frac{(np)^{m+l-s}(1-p^s)}{s!(m-s)!(l-s)!}\bigg)\Big(1+O\Big(\frac1n\Big)\Big).
\end{align}
In particular, letting $m=l$ in \eqref{covsm1nsl1nasym} yields that
\begin{align}\label{varsmn}
\V[S_{m+1,n}]=n\bigg(\frac{(np)^{2m-1}(1-p)}{[(m-1)!]^2}+\sum_{s=1}^m\frac{(np)^{2m-s}(1-p^s)}{s![(m-s)!]^2}\bigg)\Big(1+O\Big(\frac1n\Big)\Big), 
\quad m\ge1.
\end{align}

Applying Proposition \ref{propstar}, together with \eqref{esmn} 
and \eqref{covsmnslnex},
it is straightforward to obtain the explicit expressions 
of the mean and variance of $Z_n^{(k)}$, for any $k\ge1$. 
We summarize these results into the following proposition,
and omit the details.

\begin{proposition}\label{propmeanvarznk}
For any $k\ge1$, let $Z^{(k)}_n$ be the $k$-th order generalized Zagreb index of $G(n, p)$.
 Then we have 
\begin{equation}\label{ezk}
    \E \big[Z^{(k)}_n\big]=\sum_{m=1}^k \stirling{k}{m}\frac{n!}{(n-m-1)!}p^m,
\end{equation}
and
\begin{align}\label{vzk}
\V\big[Z^{(k)}_n\big]&=n\sum_{m=1}^k\sum_{l=1}^km!\,l!\stirling{k}{m} \stirling{k}{l}
\bigg[(n-1)\binom{n-2}{m-1}\binom{n-2}{l-1}p^{m+l-1}(1-p)\notag\\ 
&\quad +\sum_{s=\max\{1,m+l-n+1\}}^{\min\{m,l\}} \binom{n-1}{s,m-s,l-s,n-m-l+s-1} p^{m+l-s}(1-p^s) 
\bigg].
\end{align}
\end{proposition}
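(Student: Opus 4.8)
The plan is to derive Proposition \ref{propmeanvarznk} directly from Proposition \ref{propstar} by taking expectations and variances of the linear combination $Z_n^{(k)}=\sum_{m=1}^k m!\stirling{k}{m}S_{m+1,n}$, and then substituting the exact moment formulas \eqref{esmn} and \eqref{covsmnslnex} that have already been established. First I would compute the mean: by linearity of expectation,
\[
\E\big[Z_n^{(k)}\big]=\sum_{m=1}^k m!\stirling{k}{m}\,\E[S_{m+1,n}]
=\sum_{m=1}^k m!\stirling{k}{m}\,n\binom{n-1}{m}p^m .
\]
It then only remains to observe the combinatorial simplification $m!\,n\binom{n-1}{m}=n\cdot\frac{(n-1)!}{(n-1-m)!}=\frac{n!}{(n-m-1)!}$, which turns the right-hand side into \eqref{ezk}. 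This step is routine and I would not dwell on it.

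Next I would compute the variance. Since variance of a linear combination expands as a double sum of covariances,
\[
\V\big[Z_n^{(k)}\big]=\sum_{m=1}^k\sum_{l=1}^k m!\,l!\stirling{k}{m}\stirling{k}{l}\,\cov\big(S_{m+1,n},S_{l+1,n}\big),
\]
and now I would insert the explicit covariance expression \eqref{covsmnslnex}. Splitting that expression into its two summands — the $n(n-1)\binom{n-2}{m-1}\binom{n-2}{l-1}p^{m+l-1}(1-p)$ term coming from distinct centers, and the $n\sum_s\binom{n-1}{s,m-s,l-s,n-m-l+s-1}p^{m+l-s}(1-p^s)$ term coming from a shared center — and factoring out the common $n$, one arrives at exactly the bracketed two-line expression in \eqref{vzk}. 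The bookkeeping here is entirely mechanical: it is just substitution and reorganisation, with no asymptotics needed because the formula is exact.

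The main (and only) obstacle is really a presentational one rather than a mathematical one: making sure that the ranges of the inner sum over $s$ and the edge cases (e.g.\ small $n$ relative to $m+l$, the convention that $\binom{n-1}{s,m-s,l-s,n-m-l+s-1}=0$ when the last entry is negative, and $\stirling{k}{m}=0$ for $m>k$ or $m=0$) are handled consistently so that \eqref{vzk} holds verbatim for all $k\ge1$ and all $n$. Since \eqref{esmn} and \eqref{covsmnslnex} already encode these conventions correctly, the cleanest writeup is simply to cite Proposition \ref{propstar} together with \eqref{esmn} and \eqref{covsmnslnex}, perform the two substitutions above, and remark — as the authors do — that the remaining algebra is straightforward and may be omitted. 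No genuinely hard step is involved; the content is entirely front-loaded into Proposition \ref{propstar} and the moment computations preceding the statement.
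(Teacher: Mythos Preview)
Your approach is correct and is exactly the one the paper takes: the authors also derive Proposition~\ref{propmeanvarznk} by applying Proposition~\ref{propstar} together with \eqref{esmn} and \eqref{covsmnslnex}, and they explicitly omit the routine algebra. There is nothing to add.
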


For any given $k\ge 1$, it is also useful to obtain the asymptotics of the expectation and variance of $Z^{(k)}_n$. 
As $n\to\infty$, it follows that, by \eqref{ezk},
\begin{equation}\label{ezkasymp}
\E\big[Z^{(k)}_n\big]=\sum_{m=1}^k \stirling{k}{m}n^{m+1}p^m\Big(1+O\Big(\frac1n\Big)\Big),
\end{equation}
and that, by \eqref{vzk},
\begin{align}\label{vzkasymp}
\V\big[Z^{(k)}_n\big]&=n\sum_{m=1}^k\sum_{l=1}^km!\,l!\stirling{k}{m}\stirling{k}{l}
\bigg(\frac{(np)^{m+l-1}(1-p)}{(m-1)!(l-1)!}\notag\\ 
&\quad +\sum_{s=1}^{\min\{m,l\}}\frac{(np)^{m+l-s}(1-p^s)}{s!(m-s)!(l-s)!}\bigg)\Big(1+O\Big(\frac1n\Big)\Big).
\end{align}
In particular, for $k=1,2,3$, then \eqref{ezkasymp} and \eqref{vzkasymp} reduce to 
\begin{align*}
\E [Z^{(1)}_n]&=n^2p\Big(1+O\Big(\frac1n\Big)\Big),\\
\V [Z^{(1)}_n]&=2n^2p(1-p)\Big(1+O\Big(\frac1n\Big)\Big);\\
\E [Z^{(2)}_n]&=n^2p(np+1)\Big(1+O\Big(\frac1n\Big)\Big),\\
\V [Z^{(2)}_n]&=2n^2p(1-p)\big(4n^2p^2+5np+1\big)\Big(1+O\Big(\frac1n\Big)\Big);\\
\E [Z^{(3)}_n]&=n^2p\big(n^2p^2+3np+1\big)\Big(1+O\Big(\frac1n\Big)\Big),\\
\V [Z^{(3)}_n]&=2n^2p(1-p)\big(9n^4p^4+45n^3p^3+63n^2p^2+21np+1\big)\Big(1+O\Big(\frac1n\Big)\Big).
\end{align*}

The first two moments of $Z^{(k)}_n$ given in \eqref{ezkasymp} and \eqref{vzkasymp}  lead to the following results on weak convergence,
which generalize Propositions 1 and 2 in \cite{feng2013zagreb} from $k=2$ to general cases.

\begin{proposition}\label{propweakcon}
Let $Z^{(k)}_n$ be the $k$-th order generalized Zagreb index of $G(n, p)$. 
As $n\rightarrow\infty$, the following assertions hold for any $k\ge1$.
\begin{enumerate}[(i)]
\item If $n^2p\rightarrow 0$, then $ Z^{(k)}_n\inprob 0$.
\item If there exists a constant $\lambda>0$ such that $n^2 p\rightarrow\lambda$, 
then $Z^{(k)}_n/2 \indist {\rm Poi}(\lambda/2)$.
\item If $n^2 p\rightarrow \infty$, then $Z^{(k)}_n/ \E [Z^{(k)}_n]\inprob 1$.
\item If $n^2(1-p)\rightarrow 0$, then $\Prob(Z^{(k)}_n=n(n-1)^k)\rightarrow 1$.
\item If there exists a constant $\lambda>0$ such that $n^2 (1-p)\rightarrow \lambda$, then
$$\frac{n(n-1)^k-Z^{(k)}_n}{2kn^{k-1}} \indist {\rm Poi}(\lambda/2).$$
\item  If $n^2 (1-p)\rightarrow \infty$, then
$$\frac{n(n-1)^k-Z^{(k)}_n}{n^{k+1}(1-p^k)}\inprob 1.$$
\end{enumerate}
\end{proposition}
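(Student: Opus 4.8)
The plan is to treat the six regimes in three natural pairs, exploiting the near-symmetry between the sparse case ($p\to 0$) and the dense case ($p\to 1$), the latter being handled by looking at the complement graph $\overline{G(n,p)}=G(n,1-p)$. For parts (i)--(iii) I would work directly with $Z_n^{(k)}$. In part (i), since $Z_n^{(k)}\ge 0$ is integer-valued, it suffices by Markov's inequality to show $\E[Z_n^{(k)}]\to 0$; this is immediate from \eqref{ezkasymp}, because every summand is of order $n^{m+1}p^m=(n^2p)^m n^{1-m}\le n^2p\to 0$ for $m\ge 1$. For part (iii), I would show $\V[Z_n^{(k)}]/(\E[Z_n^{(k)}])^2\to 0$ and apply Chebyshev; from \eqref{ezkasymp} the mean is $\asymp\max_m n^{m+1}p^m$, and from \eqref{vzkasymp} the variance is $\asymp n\cdot\max_{m,l}(np)^{m+l-1}$, so the ratio is $\asymp (n\cdot n^{2k}p^{2k-1})/(n^{2k+2}p^{2k})=1/(n^2p)\to 0$ in the densest surviving term, with all other terms smaller.

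Part (ii) is the Poisson regime and is the crux of the first half. When $n^2p\to\lambda$, the expected number of edges is $\binom n2 p\to\lambda/2$, so with probability tending to one $G(n,p)$ has at most one edge and no two edges share a vertex (the expected number of paths of length two is $O(n^3p^2)=O((n^2p)^2/n)\to 0$, and similarly for any two disjoint edges the count concentrates). On the event that $G(n,p)$ is a partial matching with $N$ edges, every non-isolated vertex has degree exactly $1$, so $Z_n^{(k)}=\sum_v d(v)^k = 2N = Z_n^{(1)}$; thus $Z_n^{(k)}/2=N$, and $N=\sum_{i<j}I_{ij}$ is a sum of $\binom n2$ independent Bernoulli$(p)$ variables with mean $\to\lambda/2$, hence $N\indist\mathrm{Poi}(\lambda/2)$ by the classical Poisson limit theorem (or Le Cam / Chen--Stein). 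Combining with $\Prob(Z_n^{(k)}\ne Z_n^{(1)})\to 0$ gives the claim.

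For parts (iv)--(vi) I would pass to the complement: let $\hat I_{ij}=1-I_{ij}$, so $\overline{G}=G(n,1-p)$, and let $\hat d(i)=n-1-d(i)$ be the complement degrees. The key identity is a Taylor/binomial expansion of $d(i)^k=(n-1-\hat d(i))^k$; summing over $i$ gives
\begin{equation}\label{eq:complementexpansion}
n(n-1)^k-Z_n^{(k)}=\sum_{i=1}^n\Big[(n-1)^k-(n-1-\hat d(i))^k\Big]
=\sum_{r=1}^k(-1)^{r-1}\binom kr (n-1)^{k-r}\,\hat Z_n^{(r)},
\end{equation}
where $\hat Z_n^{(r)}=\sum_i\hat d(i)^r$ is the $r$-th generalized Zagreb index of $\overline G=G(n,1-p)$. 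The leading term on the right is $r=1$, namely $k(n-1)^{k-1}\hat Z_n^{(1)}=k(n-1)^{k-1}\cdot 2\hat N$ where $\hat N$ is the number of non-edges; the remaining terms are lower order exactly when $\overline G$ is sparse. Now apply the already-proved parts (i)--(iii) to $\overline G=G(n,1-p)$: part (iv) follows since $n^2(1-p)\to 0$ forces $\hat Z_n^{(r)}\inprob 0$ for all $r$, so the right side of \eqref{eq:complementexpansion} is $0$ with probability $\to 1$; part (v) follows since $n^2(1-p)\to\lambda$ gives $\hat Z_n^{(r)}/2\indist\mathrm{Poi}(\lambda/2)$ with, more precisely, $\hat Z_n^{(r)}=2\hat N$ w.h.p.\ and all $r\ge 2$ terms in \eqref{eq:complementexpansion} being $o(n^{k-1})$ relative to the $r=1$ term, so the whole sum is $2k(n-1)^{k-1}\hat N(1+o_P(1))$, i.e.\ $(n(n-1)^k-Z_n^{(k)})/(2kn^{k-1})=\hat N(1+o_P(1))\indist\mathrm{Poi}(\lambda/2)$; part (vi) follows from part (iii) applied to $\overline G$, giving $\hat Z_n^{(k)}/\E[\hat Z_n^{(k)}]\inprob 1$ together with the asymptotics $\E[\hat Z_n^{(k)}]\sim n^{k+1}(1-p)^k$ from \eqref{ezkasymp} and the fact that the $r=k$ term $k(n-1)^{0}\hat Z_n^{(k)}$ is the dominant one in \eqref{eq:complementexpansion} when $1-p$ is not too small --- more carefully, one checks $\sum_{r=1}^k\binom kr(n-1)^{k-r}\E[\hat Z_n^{(r)}]\sim n^{k+1}(1-p^k)$, matching the stated normalization.

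The main obstacle I anticipate is the bookkeeping in part (v): one must verify that in \eqref{eq:complementexpansion} every term with $r\ge 2$ is genuinely negligible compared with the $r=1$ term after dividing by $2kn^{k-1}$. Since $\hat Z_n^{(r)}$ is $O_P(1)$-many (actually $\mathrm{Poi}$-limit) when $r$ "sees" only the $\hat N$ non-edges but could a priori be inflated by a non-edge of degree $\ge 2$ in $\overline G$, the argument rests on the same w.h.p.\ matching structure as in part (ii) applied to $\overline G$: with probability $\to 1$, $\overline G$ is a partial matching, on which $\hat Z_n^{(r)}=2\hat N$ \emph{exactly} for every $r\ge 1$, collapsing the $r\ge 2$ contributions to $2\hat N\sum_{r\ge 2}(-1)^{r-1}\binom kr(n-1)^{k-r}=O(\hat N\, n^{k-2})=o_P(n^{k-1})$. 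Making this "collapse on a good event, then estimate" step rigorous --- and checking that the good event for $\overline G$ still has probability tending to one under $n^2(1-p)\to\lambda$ --- is the only place requiring real care; everything else is a routine moment computation fed into Markov/Chebyshev plus the classical Poisson limit theorem.
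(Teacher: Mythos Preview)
Your plan coincides with the paper's in all essentials: moment bounds plus Markov/Chebyshev for (i), (iii) and (iv), the Poisson limit for the edge count in (ii) after observing that higher-order star counts vanish in probability (your ``partial matching'' argument is equivalent to the paper's use of $S_{m+1,n}\inprob 0$ for $m\ge 2$ via Proposition~\ref{propstar}), and the complement identity \eqref{eq:complementexpansion} for (iv)--(v), which is exactly the paper's \eqref{znkcomp}.

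The one place where you diverge, and where your reasoning slips, is part (vi). You assert that the $r=k$ term in \eqref{eq:complementexpansion} is dominant ``when $1-p$ is not too small'' (and you miswrite its coefficient as $k$ rather than $(-1)^{k-1}$). In fact the $r=1$ term is \emph{always} the largest in absolute value: when $n(1-p)\to\infty$ one has $\binom{k}{r}(n-1)^{k-r}\E[\hat Z_n^{(r)}]\asymp n^{k+1}(1-p)^r$, which is decreasing in $r$, and when $n(1-p)$ stays bounded the $r$-th term is $\asymp n^{k-r+2}(1-p)$, again decreasing. Moreover, when $p$ is bounded away from $1$ the alternating terms genuinely cancel (e.g.\ for $k=2$, $p\to 0$, the $r=1$ term is $\sim 2n^3$ while the sum is $\sim n^3$). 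Your complement route can still be salvaged --- applying (iii) to each $\hat Z_n^{(r)}$ gives fluctuations of size $o_P(n^{k+1}(1-p))$, and since $1-p\le 1-p^k$ this is $o_P(n^{k+1}(1-p^k))$, which suffices --- but the paper bypasses the expansion entirely with a direct Chebyshev argument: it simply reads off $n(n-1)^k-\E[Z_n^{(k)}]\sim n^{k+1}(1-p^k)$ from \eqref{ezkasymp} and $\V[Z_n^{(k)}]=O(n^{2k}(1-p))=o\big((n^{k+1}(1-p^k))^2\big)$ from \eqref{vzkasymp}, never passing to $\overline G$ for this part.
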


\begin{proof} We prove the assertions in Proposition \ref{propweakcon} in turn.
If $n^2 p\rightarrow 0$, by \eqref{ezkasymp} and \eqref{vzkasymp},
both $\E [Z^{(k)}_n]$ and $\V [Z^{(k)}_n]$ tend to 0 for any $k\ge 1$,
which implies (i).
In fact, under the condition $n^2 p\rightarrow 0$, the probability that
$G(n,p)$ is an empty graph tends to 1.

Assume that $n^2 p\rightarrow \lambda>0$. 
Then, for any $m\ge2$, it follows that $n^{m+1}p^m\to 0$.
This implies that for any $m\ge2$, 
both $\E [S_{m+1,n}]$ 
and $\V [S_{m+1,n}]$ tend to 0, by \eqref{esm1nasym} and \eqref{varsmn}.
Therefore, similarly to (i), we can obtain that $S_{m+1,n}$ converges 
in probability to 0 for any $m\ge2$. This implies that,
by Proposition \ref{propstar}, for any $k\ge2$, the index $Z^{(k)}_n$ has
the same asymptotic behavior as $S_{2,n}$, 
which is equal to twice the number of edges in $G(n,p)$.
To prove (ii), it is sufficient to show that $S_{2,n}/2$ converges 
in distribution to ${\rm Poi}(\lambda/2)$, which, in fact, is already known 
(see, e.g., Theorem 3.19 in \cite{janson2000rg}).

If $n^2 p\rightarrow \infty$, it follows by \eqref{ezkasymp} and \eqref{vzkasymp}
that $\V [Z^{(k)}_n]=o((\E [Z^{(k)}_n])^2)$ whether $np$ converges or not.
Then (iii) holds by Chebyshev’s inequality. 

To prove (iv) and (v), we shall consider $\overline{G}(n, p)$, 
the complement graph of $G(n,p)$.
Recall that the graphs $\overline{G}(n, p)$ and $G(n,p)$ 
have the same vertex set $\{1,2,\cdots,n\}$,
but any two distinct vertices are adjacent in $\overline{G}(n, p)$ 
if and only if they are not adjacent in $G(n,p)$.
For any two vertices $1\le i\neq j\le n$,
denote by $\overline{I}_{ij}$ the indicator of that there exists between
an edge $i$ and $j$ in $\overline{G}(n, p)$. 
Then, we have that $\overline{I}_{ii}=0$, and
\begin{align*}
\overline{I}_{ij}+I_{ij}=1, \quad  1\le i\neq j\le n. 
\end{align*}

If $n^2(1-p) \rightarrow 0$, by symmetry, 
(i) implies that the probability that $\overline{G}(n, p)$ is empty tends to 1,
and thus $G(n,p)$ is a complete graph with probability tending to 1.
Then (iv) holds by the fact that the $k$-th order generalized Zagreb index of  
a complete graph on $n$ vertices is $n(n-1)^k$ for any $k\ge1$.

In the complement graph $\overline G(n, p)$,
we also define $\overline{S}_{m,n}$ to be the number of $m$-stars for any $m\ge1$,
and $\overline{Z}_{n}^{(k)}=\sum_{i=1}^n(\sum_{j=1}^n \overline I_{ij})^k$ to be the $k$-th order generalized Zagreb index.
Note that
\begin{align}\label{znkcomp}
Z_{n}^{(k)} &= \sum_{i=1}^n\bigg(n-1-\sum_{j=1}^n \overline{I}_{ij}\bigg)^k \notag\\
 &= n(n-1)^k+\sum_{i=1}^n\sum_{m=1}^k (-1)^m \binom{k}{m}(n-1)^{k-m}\bigg(\sum_{j=1}^n\overline{I}_{ij} \bigg)^m\notag\\
 &= n(n-1)^k+\sum_{m=1}^k (-1)^m \binom{k}{m}(n-1)^{k-m}\overline{Z}_{n}^{(m)}
\end{align}
and 
\begin{align}\label{idencomb}
-\sum_{m=1}^k (-1)^m \binom{k}{m}(n-1)^{k-m}=(n-1)^k-(n-2)^k=kn^{k-1}\Big(1+O\Big(\frac1n\Big)\Big).
\end{align}
By Proposition \ref{propstar}, we have
\begin{align*}
\overline{Z}_{n}^{(k)}=\sum_{m=1}^k m!\stirling{k}{m}\overline{S}_{m+1,n}.
\end{align*}
Therefore, if $n^2(1-p)\to\lambda$, from the proof of (ii) it follows that 
$\Prob(\overline{Z}_{n}^{(k)}=\overline{S}_{2,n})\to1$  for any given $k\ge1$,
and $\overline{S}_{2,n}/2\inprob {\rm Poi}(\lambda/2)$.
Hence, the assertion (v) holds by \eqref{znkcomp} and \eqref{idencomb}.

If $n^2 (1-p)\rightarrow \infty$, it follows that, by \eqref{ezkasymp}, 
\begin{align*}
    n(n-1)^k-\E[Z^{(k)}_n]&=n(n-1)^k-\sum_{m=1}^k \stirling{k}{m}n^{m+1}p^m\\
    &=n^{k+1}(1-p^k)\Big(1+O\Big(\frac1n\Big)\Big).
\end{align*}
and that, by \eqref{vzkasymp},
\begin{align*}
\V [n(n-1)^k-Z^{(k)}_n]=\V[Z^{(k)}_n]
    =O(n^{2k}(1-p))
    =o\big(\E\big[n(n-1)^k-Z^{(k)}_n\big]^2\big).
\end{align*}
This implies that (vi) holds by Chebyshev’s inequality, 
and thus completes the proof of Proposition \ref {propweakcon}. 
\end{proof}

\section{Asymptotic Normality of Generalized Zagreb Indices}
In this section, we shall establish the asymptotic normality of 
the generalized Zagreb indices of $G(n,p)$. 
As in the previous section, our methodology depends heavily on the limiting behaviors 
of the numbers of star graphs of various sizes. Indeed,  
there is a substantial body of research on the asymptotic properties of subgraph counts in $G(n,p)$
that has been conducted over several decades (see \cite{janson2000rg} and references therein).
A necessary and sufficient condition for asymptotic normality of the number of a given subgraph was first given by Ruci\'nski
\cite{Rucinski1998when}. More precisely, for any fixed simple graph $\mathbb{H}$, 
let $H_n=H_n(\mathbb{H})$ denote the number of subgraphs of $G(n,p)$  that are isomorphic to $\mathbb{H}$.
As $n\to\infty$, the normalized random variable $(H_n-\E[H_n])/\sqrt{{\rm Var}[H_n]}$
 converges in distribution to the standard normal distribution $N(0,1)$ if and only if
\[
\min_{\mathbb{G}\subset \mathbb{H}:\, e_{\mathbb{G}}\ge1}\big\{n^{v_\mathbb{G}}p^{e_\mathbb{G}}\big\}\to\infty \quad \mbox{and} \quad
n^2(1-p)\to\infty,
\]
 where $v_\mathbb{G}$ and $e_\mathbb{G}$ stand for the numbers of vertices and edges of $\mathbb{G}$, respectively. 
Especially, for the number of $m$-stars $S_{m,n}$ in $G(n,p)$ with any fixed $m\ge2$, we have that 
 $(S_{m,n}-\E[S_{m,n}])/\sqrt{{\rm Var}[S_{m,n}]}$  converges in distribution to  $N(0,1)$ if and only if one of the following three conditions holds:
 (1) $np\to 0$ and $n^{m+1}p^m\to\infty$, (2) $np\to c$ for some constant $c > 0$, (3) $np\to\infty$ and $n^2(1-p)\to\infty$. 
Until recently,  Ruci\'nski's result was complemented with explicit bounds 
on the Kolmogorov distance in \cite{privault2020normal}, see also \cite{eichel2023Kol,rollon2022Kol}.
When $0<p<1$ is fixed, the joint normality of the numbers of several distinct subgraphs is shown in 
\cite{janson1991asymp,reinert2010random}, with a corresponding result on the convergence rate in 
\cite{krokowski2017multi}.  
In addition to ER random graphs, the problems of subgraph counting are also considered in other random graph models, 
such as the Poisson random connection model \cite{liu2024+normal,penrose2018inhomo}, the graphon-based random graphs 
\cite{kaur2021higher, bhatt2023fluc}, the uniform attachment model \cite{bjo2024appr},
and the random hypergraphs \cite{dejong1996central,michalczuk2024+normal}.

The following theorem establishes the joint asymptotic normality of the numbers of $k$ different stars
under suitable conditions where $p$ is allowed to be vary with $n$.

\begin{theorem}\label{thmstar}
For any $m\ge 2$, let $S_{m,n}$ be the number of $m$-stars in $G(n,p)$.
For any given integer $k\ge 1$, as $n\rightarrow \infty$, the following assertions hold.
\begin{enumerate}[(i)]
\item If $np\rightarrow c$ for some constant $c > 0$, then
\begin{equation*}
    \frac{1}{\sqrt{n}}\bigg(
        S_{2,n}-\E[S_{2,n}],S_{3,n}-\E[S_{3,n}], \cdots, S_{k+1,n}-\E[S_{k+1,n}]
    \bigg)^{\top}\indist N\big(\bm{0},\bm{\Sigma}_{k}^{\star}\big),
\end{equation*}
where $\bm{0}$ is a $k$-dimensional vector of zeros, 
and the $(m,l)$-entry of the covariance matrix 
$\bm{\Sigma}_{k}^{\star}$ is given by
\begin{equation*}
    \sigma_{ml}^{\star}=\frac{c^{m+l-1}}{(m-1)!(l-1)!} +\sum_{s=1}^{\min(m,l)}\frac{c^{m+l-s}}{s!(m-s)!(l-s)!}, \quad 1\le m,l\le k.
\end{equation*}

\item If $np\rightarrow \infty$ and $n^2 (1-p)^3\rightarrow \infty$, then,
\begin{equation*}
    \sqrt{\frac{p}{2(1-p)}}
    \Bigg(
        \frac{S_{2,n}-\E[S_{2,n}]}{np},\frac{S_{3,n}-\E[S_{3,n}]}{(np)^2},\cdots,\frac{S_{k+1,n}-\E[S_{k+1,n}]}{\frac{1}{(k-1)!}(np)^k}
    \Bigg)^{\top}\indist N\big(\bm{0},\bm{U}_{k}\big),
\end{equation*}
where $\bm{U}_{k}$ is a matrix of order $k$ with all entries equal to 1.
\end{enumerate}
\end{theorem}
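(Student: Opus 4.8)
The plan is to prove both assertions via the Cram\'er--Wold device combined with a central limit theorem for sums of dependent indicators, exploiting the dependency-graph structure implicit in $G(n,p)$. For a fixed nonzero vector $\bm{t}=(t_1,\dots,t_k)^{\top}$, the linear combination $\sum_{m=1}^{k}t_m S_{m+1,n}$ (suitably centered and normalized) is itself a sum over vertices $i$ of the quantities $W_i=\sum_{m=1}^{k}t_m S_{m+1,n}^{(i)}$, and by \eqref{kthpowdegree} each $W_i$ is a polynomial in the degree $D_i=\sum_{j}I_{ij}$; equivalently one may regard $\sum_i W_i$ as a (signed, weighted) count of star subgraphs. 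The key structural fact is that $W_i$ and $W_{i'}$ are dependent only through the single edge indicator $I_{ii'}$, so the dependency graph on the index set $\{1,\dots,n\}$ has maximum degree $n-1$; this is exactly the setting handled by Janson's normal-convergence theorem for sums with a dependency graph (Theorem in \cite{janson2000rg}, or the moment/cumulant method), provided one controls the relevant moments.

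First I would establish the variance asymptotics. In regime (i), $np\to c$, so \eqref{esm1nasym} gives $\E[S_{m+1,n}]=\Theta(n)$ and \eqref{covsm1nsl1nasym} gives $\cov(S_{m+1,n},S_{l+1,n})\sim n\sigma_{ml}^{\star}$ with $\sigma_{ml}^{\star}$ as stated (the $1-p^s$ and $1-p$ factors tend to $1$ since $p\to0$). Hence $n^{-1/2}$ is the correct scaling and $\bm\Sigma_k^{\star}$ is the limiting covariance matrix. In regime (ii), $np\to\infty$, and from \eqref{covsm1nsl1nasym} the dominant term in $\cov(S_{m+1,n},S_{l+1,n})$ is the $s=1$ term $n(np)^{m+l-1}(1-p)/[(m-1)!(l-1)!]$ together with the cross-center term of the same order; careful bookkeeping (the condition $np\to\infty$ ensures lower-order $s\ge2$ terms are negligible relative to this) shows $\cov(S_{m+1,n},S_{l+1,n})\sim \frac{2(1-p)}{p}\cdot\frac{(np)^{m+l}}{(m-1)!(l-1)!}$, so after dividing $S_{m+1,n}-\E[S_{m+1,n}]$ by $\frac{1}{(m-1)!}(np)^m$ and multiplying by $\sqrt{p/(2(1-p))}$, every entry of the limiting covariance matrix equals $1$, i.e. $\bm U_k$. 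Note $\bm U_k$ is rank one, which signals that the limit is degenerate: all normalized coordinates converge to one and the same Gaussian, reflecting that in this regime every $S_{m+1,n}$ is asymptotically a deterministic function of the total edge count.

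With the variances in hand, the CLT step is to verify, for the scalar $T_n:=\sum_i W_i$, that $\V[T_n]$ has the order computed above and that a Lyapunov- or cumulant-type negligibility condition holds. I would bound the higher mixed cumulants of the summands $W_i$ using the dependency-graph bound: the $r$-th cumulant of $T_n$ is $O(n\cdot(\text{max summand moments})\cdot(\text{dependency degree})^{r-1})$ up to combinatorial constants, and one checks that $\kappa_r(T_n)/\V[T_n]^{r/2}\to0$ for all $r\ge3$. In regime (i) the summands $W_i$ are bounded in $L^r$ for every $r$ (since $D_i\sim\mathrm{Bin}(n-1,p)$ with $np\to c$ has all moments bounded), which makes this routine. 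In regime (ii) one normalizes first: the rescaled summand is of order $(np)^{-k}$ times a degree-$k$ polynomial in $D_i-(n-1)p$, whose centered moments are controlled by $\V[D_i]=(n-1)p(1-p)$; the extra hypothesis $n^2(1-p)^3\to\infty$ is precisely what is needed to push the third cumulant (which carries a factor $(1-p)^{3/2}$ relative weakness from the edge-level Bernoulli skewness, against a $\sqrt n$ gain) below the variance to the $3/2$ power.

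The main obstacle I anticipate is the bookkeeping in regime (ii): one must show simultaneously that the two types of covariance contributions (same-center, from \eqref{covs1mns1ln}, and different-center, from \eqref{covs1mns2ln}) add up to exactly twice the leading term so that the normalization produces the all-ones matrix, and that all subleading contributions across the entire double sum defining $\V[T_n]$ are genuinely lower order uniformly — this is delicate because $p$ may approach $1$, so the factors $1-p$ and $1-p^s$ cannot be treated as constants and the ``leading term'' is only leading after the $n^2(1-p)^3\to\infty$ condition is invoked. A secondary subtlety is that, because $\bm U_k$ is singular, the Cram\'er--Wold conclusion must be stated for all $\bm t$ including those in the kernel (for which the limiting variance of $\bm t^{\top}(\cdots)$ is $0$), where one argues convergence to the constant $0$ directly from the vanishing of the variance; this is harmless but should be remarked on so the degenerate multivariate CLT is correctly concluded. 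I would also double-check the edge cases of the Stirling-number identity \eqref{kthpowdegree} feeding into $W_i$, though Proposition \ref{propstar} already does the heavy lifting there.
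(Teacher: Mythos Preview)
Your covariance asymptotics and the identification of the limiting matrices $\bm\Sigma_k^\star$ and $\bm U_k$ are correct, and the Cram\'er--Wold strategy is the right one. The gap is in the CLT step itself.

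You propose to decompose $T_n=\sum_{i=1}^n W_i$ with $W_i=\sum_m t_m S_{m+1,n}^{(i)}$ and observe that the dependency graph on the vertex index set $\{1,\dots,n\}$ has maximum degree $n-1$. But that dependency graph is the \emph{complete} graph: for every pair $i\neq i'$, the summands $W_i$ and $W_{i'}$ share the edge indicator $I_{ii'}$, so no two summands are independent. A dependency-graph bound of the form $|\kappa_r(T_n)|=O(n\cdot M^r\cdot D^{r-1})$ with $D=n-1$ therefore gives $|\kappa_r(T_n)|=O(n^r)$ in regime (i) (where each $W_i$ has bounded moments), while $\V[T_n]=\Theta(n)$, so $\kappa_r(T_n)/\V[T_n]^{r/2}=O(n^{r/2})\to\infty$. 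The same obstruction arises if you try Janson's $M_nQ_n^2$ criterion (Lemma~\ref{lemjanson}) at the vertex level: since $\overline N_{L_n}(i_1,i_2)=\{1,\dots,n\}$ for every pair, you get $Q_n=\Theta(n)$ and $M_nQ_n^2/\V[T_n]^{3/2}=\Theta(n^{3/2})$. The dependency among the $W_i$ is genuinely weak (each pair shares only one edge indicator), but the dependency-graph formalism is too coarse to see this when you index by vertices.

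The paper resolves this by decomposing one level finer: it writes $T_{k,n}=\sum_{\alpha\in\mathcal B_n}X_{n\alpha}$ as a sum over individual labeled star subgraphs $\alpha$, with $X_{n\alpha}$ a weighted indicator that the star $\alpha$ is present. The dependency graph $L_n$ on $\mathcal B_n$ connects two stars iff they share an edge of $\mathbb K_n$; for fixed $\alpha_1,\alpha_2$ the closed neighborhood $\overline N_{L_n}(\alpha_1,\alpha_2)$ consists only of stars using at least one edge of $\mathbb S_{n\alpha_1}\cup\mathbb S_{n\alpha_2}$, and the conditional-expectation sum over this neighborhood is $O(1)$ in regime (i). This gives $M_n=O(n)$, $Q_n=O(1)$, and hence $M_nQ_n^2/\V[T_{k,n}]^{3/2}=O(n^{-1/2})$ via Lemma~\ref{lemjanson}. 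One further ingredient you did not mention: to lower-bound $\V[T_{k,n}]$ uniformly over all unit vectors $\bm a$ (as required by Cram\'er--Wold), the paper verifies that $\bm\Sigma_k^\star$ is positive definite by computing its determinant explicitly. In regime (ii) the paper carries out the analogous calculation with the rescaled summands, and the condition $n^2(1-p)^3\to\infty$ enters exactly where you anticipated, to make $M_n'(Q_n')^2/\V^{3/2}\to0$; your remark about the singular limit $\bm U_k$ and the kernel directions in Cram\'er--Wold is a valid point that the paper leaves implicit.
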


It is noteworthy that in both (i) and (ii) of Theorem \ref{thmstar}, 
the expectation $\E[S_{m+1,n}]$ cannot be replaced by its leading term for any $1\le m\le k$,
since the Slutsky's Theorem cannot be directly applied here.
The reason is as follows. 
We first consider (ii) in the special case where $p\in(0,1)$ is a constant. 
By \eqref{esm1nasym} and \eqref{varsmn},  we now have that the difference 
$\E[S_{m+1,n}]-n^{m+1}p^m/m!=O(n^m)$
is of the same order as the standard deviation of $S_{m+1,n}$. 
Then, in Case (i),  also by \eqref{esm1nasym} and \eqref{varsmn} we have that for any $m\ge1$,
\begin{align}
	\E[S_{m+1,n}]&\sim \frac{c^{m}}{m!}n, \label{esmnasymi}\\
	\V[S_{m+1,n}]&\sim \bigg(\frac{c^{2m-1}}{[(m-1)!]^2}+\sum_{s=1}^{m}\frac{c^{2m-s}}{s![(m-s)!]^2}\bigg)n, 
	\label{varsmnasymi}
\end{align}
both of which are of the same order $n$. 
At this time, we choose the probabilities $p=c(1+a_n)^{1/m}/n$, 
such that the sequence $\{a_n, n\ge1\}$ satisfies $a_n\to 0$ and $\lim\sqrt{n}\,a_n>0$. 
After straightforward calculations, by \eqref{esmn} one can obtain
\[
\E[S_{m+1,n}]-n{c^{m}}/{m!}=n\binom{n-1}{m}p^m-n{c^{m}}/{m!}=
\frac{c^{m}n}{m!}\Big(a_n+O\Big(\frac1n\Big)\Big).
\]
Therefore, we cannot assert that the difference $\E[S_{m+1,n}]-nc^m/m!$ is of order $o(\sqrt{n})$, 
due to the assumption that $\sqrt{n}\,a_n$ converges to a positive limit.

Before proving Theorem \ref{thmstar}, we shall briefly introduce 
the concept of dependency graph for random variables \cite{janson2000rg}.
Let $\{X_i\}_{i\in {\cal I}}$ be a family of random variables 
defined on a common probability space, where ${\cal I}$ is an index set.
A {\em dependency graph} for $\{X_i\}_{i\in {\cal I}}$ is any graph $L$ with the vertex set ${\cal I}$,
such that for any two disjoint subsets ${\cal A}, {\cal B}\subset {\cal I}$,
if there exist no edges between them, 
the families $\{X_i\}_{i\in {\cal A}}$  and $\{X_j\}_{j\in {\cal B}}$
are mutually independent.
For any integer $m\ge1$ and $i_1,i_2,\cdots,i_m\in {\cal I}$, let
\begin{align}\label{closedn}
\overline{N}_L(i_1,i_2,\cdots,i_m)=\bigcup_{s=1}^m\{j\in{\cal I}: j=i_s~\mbox{or there exists an edge between}~j~\mbox{and}~i_s~\mbox{in}~L\} 
\end{align}
denote the closed neighborhood of vertices $i_1,i_2,\cdots,i_m$ in $L$.

The following auxiliary lemma, stated as Theorem 6.33 in \cite{janson2000rg}, 
is a particularly useful tool to determine the asymptotic normality of the sum of 
a family of dependent random variables.

\begin{lemma}\label{lemjanson}
 Let $\{Y_n\}_{n=1}^{\infty}$ be a sequence of random variables such that 
 $Y_n=\sum_{\alpha\in {\cal A}_n}X_{n\alpha}$, where for each $n$, the random variables 
 $\{X_{n\alpha}: \alpha\in  {\cal A}_n\}$ has a dependency graph $L_n$.
 Suppose that there exist numbers $M_n$ and $Q_n$ such that 
 $\sum_{\alpha\in {\cal A}_n}\E[|X_{n\alpha}|]\le M_n$,
 and for every $\alpha_1,\alpha_2$,
 \begin{align*}
     \sum_{\alpha\in \overline{N}_{L_n}(\alpha_1,\alpha_2)}\E\big[|X_{n\alpha}|\big|X_{n\alpha_1},X_{n\alpha_2}\big]\le Q_n.
 \end{align*}
 As $n\to \infty$, if $M_nQ_n^2/(\V[Y_n])^{3/2} \to 0$, then
 \begin{align*}
     \frac{Y_n-\E[Y_n]}{\sqrt{\V[Y_n]}}\indist N(0,1).
 \end{align*}
\end{lemma}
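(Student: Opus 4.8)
The plan is to prove $W_n := (Y_n-\E[Y_n])/\sqrt{\V[Y_n]} \indist N(0,1)$ by the method of cumulants. Since the standard normal law is uniquely determined by its moments, it suffices to show that all cumulants of $W_n$ converge to those of $N(0,1)$. By centering and normalizing, $\kappa_1(W_n)=0$ and $\kappa_2(W_n)=1$ hold automatically, so writing $\sigma_n^2=\V[Y_n]$ and using the homogeneity $\kappa_r(W_n)=\kappa_r(Y_n)/\sigma_n^r$, the whole task reduces to showing that $\kappa_r(Y_n)/\sigma_n^r\to 0$ for each fixed order $r\ge 3$. Thus I must control the unnormalized cumulant $\kappa_r(Y_n)$ and compare it against $\sigma_n^r$.

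The heart of the argument is the combinatorial structure of joint cumulants on a dependency graph. Expanding $\kappa_r(Y_n)=\sum_{\alpha_1,\dots,\alpha_r}\kappa(X_{n\alpha_1},\dots,X_{n\alpha_r})$ over all $r$-tuples of indices, I would invoke the basic property that a joint cumulant vanishes whenever its argument set splits into two nonempty groups that are mutually independent. By the definition of the dependency graph $L_n$, this forces $\kappa(X_{n\alpha_1},\dots,X_{n\alpha_r})=0$ unless the vertices $\alpha_1,\dots,\alpha_r$ induce a connected configuration in $L_n$. To bound the surviving terms, I would first observe that taking $\alpha_1=\alpha_2=\alpha$ in the hypothesis and keeping only the term $\alpha$ of the (nonnegative) neighborhood sum gives $|X_{n\alpha}|\le Q_n$ almost surely for every $\alpha$. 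I would then count the connected $r$-tuples by fixing a spanning tree on their vertex set (there are at most $r^{r-2}$ labeled trees) and traversing it, using the conditional bound $\sum_{\alpha\in\overline{N}_{L_n}(\alpha_1,\alpha_2)}\E[|X_{n\alpha}|\,|\,X_{n\alpha_1},X_{n\alpha_2}]\le Q_n$ to absorb each successive neighbor. The root contributes $\sum_\alpha\E[|X_{n\alpha}|]\le M_n$ and each of the remaining $r-1$ vertices contributes a factor $Q_n$, yielding a bound of the shape $|\kappa_r(Y_n)|\le C_r M_n Q_n^{r-1}$ with $C_r$ depending only on $r$.

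It then remains to compare this with $\sigma_n^r$. A covariance computation along the same lines---expanding $\sigma_n^2=\sum_{\alpha,\beta}\cov(X_{n\alpha},X_{n\beta})$, discarding the non-adjacent pairs (which are independent and contribute $0$), and estimating the rest with $|X_{n\beta}|\le Q_n$ together with the conditional hypothesis---yields $\sigma_n^2\le 2M_nQ_n$. Setting $\gamma_n=M_nQ_n^2/\sigma_n^3$, which tends to $0$ by assumption, this variance estimate gives $Q_n/\sigma_n\le 2\gamma_n$. Consequently, for each $r\ge 3$,
\[
|\kappa_r(W_n)|\le\frac{C_r M_n Q_n^{r-1}}{\sigma_n^r}
=C_r\,\gamma_n\left(\frac{Q_n}{\sigma_n}\right)^{r-3}
\le C_r\,2^{\,r-3}\,\gamma_n^{\,r-2}\longrightarrow 0,
\]
since $r-2\ge 1$. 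Hence every higher cumulant of $W_n$ vanishes in the limit, and the method of cumulants (convergence of cumulants implies convergence of moments, which in turn implies convergence in distribution to the moment-determinate $N(0,1)$) delivers the claim.

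The main obstacle is the cumulant bound $|\kappa_r(Y_n)|\le C_r M_n Q_n^{r-1}$. Establishing it rigorously requires the partition-lattice (M\"obius) expression of joint cumulants in terms of joint moments, a careful proof that index sets disconnected in $L_n$ contribute nothing, and the spanning-tree counting that converts the global neighborhood control supplied by $Q_n$ into a clean per-vertex factor. The conditional form of the hypothesis is precisely what makes this traversal argument succeed without any assumption that $L_n$ has bounded degree, so the delicate point is to route the estimates through the conditional expectations rather than through crude degree bounds.
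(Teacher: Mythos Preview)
The paper does not prove this lemma at all; it is quoted verbatim as Theorem~6.33 of Janson, \L{}uczak and Ruci\'nski, \emph{Random Graphs}, and then used as a black box in the proofs of Theorems~\ref{thmstar} and~\ref{nesucond}. So there is no ``paper's own proof'' to compare against.

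That said, your cumulant approach is a correct and standard route to dependency-graph CLTs of this type, and it is close in spirit to the argument in the cited book. The deduction of $|X_{n\alpha}|\le Q_n$ a.s.\ from the diagonal case $\alpha_1=\alpha_2=\alpha$, the covariance bound $\sigma_n^2\le 2M_nQ_n$, and the final reduction $|\kappa_r(W_n)|\le C_r\,2^{r-3}\gamma_n^{\,r-2}$ are all sound. The one place requiring genuine care, which you rightly single out, is the bound $|\kappa_r(Y_n)|\le C_rM_nQ_n^{r-1}$: here one must combine the M\"obius (partition-lattice) expansion of the joint cumulant with the fact that only tuples connected in $L_n$ survive, and then run the spanning-tree traversal so that each non-root vertex is absorbed by a single application of the conditional hypothesis. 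The conditioning on \emph{two} indices is precisely what lets this traversal handle the cross-terms coming from the partition expansion without assuming bounded degree in $L_n$. With that estimate in hand the remainder is routine.
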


We now give a formal proof of Theorem \ref{thmstar} in the following. 

\begin{proof}[Proof of Theorem \ref{thmstar}]
If $np\to c$ for some constant $c>0$, it follows by \eqref{covsm1nsl1nasym} that 
\begin{equation}\label{covsmnasym}
\lim_{n\to\infty}\cov\bigg(\frac{S_{m+1,n}}{\sqrt{n}},\frac{S_{l+1,n}}{\sqrt{n}}\bigg)=\frac{c^{m+l-1}}{(m-1)!(l-1)!}+
\sum_{s=1}^{\min\{m,l\}}\frac{c^{m+l-s}}{s!(m-s)!(l-s)!}, 
\end{equation}
for $ m,l\ge1$. Then, by \eqref{covsmnasym}, the limit of the covariance matrix $\cov(\bm{S}_{k,n}/\sqrt{n})$, 
where the random vector 
\begin{align*}
 \bm{S}_{k,n}:=\big(S_{2,n}, S_{3,n},\cdots,S_{k+1,n}\big)^{\top},
\end{align*}
is given by
\begin{align}\label{limcovmat}
\bm{\Sigma}_{k}^{\star}=\left(
    \begin{array}{cccc}
       2c  & \frac{2c^2}{1!}  & \cdots & \frac{2c^k}{(k-1)!}\\
       \frac{2c^2}{1!}  & \frac{2c^3}{1!}+\frac{c^2}{2!} & \cdots & \frac{2c^{k+1}}{(k-1)!}+\frac{c^k}{2!(k-2)!}\\
       \frac{2c^3}{2!}  & \frac{2c^4}{2!}+\frac{c^3}{2!1!} & \cdots & \frac{2c^{k+2}}{2!(k-1)!}+\frac{c^{k+1}}{2!(k-2)!}+\frac{c^k}{3!(k-3)!}\\
       \vdots & \vdots & \ddots & \vdots\\
       \frac{2c^k}{(k-1)!} & \frac{2c^{k+1}}{(k-1)!}+\frac{c^k}{2!(k-2)!} & \cdots &  \frac{c^{2k-1}}{[(k-1)!]^2}
       +\sum_{s=1}^k\frac{c^{2k-s}}{s![(k-s)!]^2}
       \\
    \end{array}
    \right).
\end{align}
For all $1\le m<k$ and for all $m<l\le k$, by adding $-c^{l-m}/(l-m)!$ times the $m$-th column to the $l$-th column, 
we can obtain a triangular matrix with diagonal elements $2c, c^2/2!, c^3/3!,\cdots, c^k/k!$. Hence,
the determinant of $\bm{\Sigma}_{k}^{\star}$ is 
\begin{equation*}
    \det \big(\bm{\Sigma}_{k}^{\star}\big)=2c^{\frac{k(k+1)}{2}}\prod_{m=1}^{k}\frac{1}{m!},
\end{equation*}
which implies that $\bm{\Sigma}_{k}^{\star}$ is a positive definite matrix for any $k\ge1$.
Consequently, there exists a real number $\lambda_{\min}>0$, which only depends on $c$ and $k$, such that
$\lambda_{\min}$ is the smallest eigenvalue of $\bm{\Sigma}_k^{\star}$. 

To prove (i), we use the well-known Cram\'er-Wold Theorem.
 For any given integer $k\ge1$, consider an arbitrary linear combination in the form
\begin{equation}\label{tknlinear}
    T_{k,n}=\sum_{m=1}^{k} a_{m} S_{m+1,n},
\end{equation}
where $a_1, a_2,\cdots,a_k$ are real numbers such that $\sum_{m=1}^k a_m^2=1$.
Then it holds that $|a_m|\le 1$ for all $1\le m\le k$. 
Then, by Slutsky's theorem together with \eqref{varsmnasymi}, it is sufficient to show that
\begin{equation}\label{tmnindist}
    \frac{T_{k,n}-\E [T_{k,n}]}{\sqrt{\V [T_{k,n}]}}\indist N(0,1).
\end{equation}

To apply Lemma \ref{lemjanson}, 
let us denote by $\{\mathbb{S}_{n\alpha}\}_{\alpha \in {\cal B}_n}$ the set of 
all possible $(m+1)$-stars in $G(n,p)$ for $1\le m\le k$,  
where ${\cal B}_n$ is an index set with the cardinality
\begin{equation}\label{calBn}
    |{\cal B}_n|=n\sum_{m=1}^{k}\binom{n-1}{m}.
\end{equation}
For any $\alpha \in {\cal B}_n$, let $s_{n\alpha}$ be the size of $\mathbb{S}_{n\alpha}$ (i.e., the number of vertices in $\mathbb{S}_{n\alpha}$), 
\begin{equation}\label{xnadef}
I_{n\alpha}=I(\mathbb{S}_{n\alpha}\mbox{~is contained in~}G(n,p)) \quad \mbox{and} \quad X_{n\alpha}=a_{s_{n\alpha}-1}I_{n\alpha}. 
\end{equation}
Then, the sum $T_{k,n}$ given in \eqref{tknlinear} can be rewritten as
\begin{equation*}
    T_{k,n}=\sum_{\alpha\in {\cal B}_n}X_{n\alpha}.
\end{equation*}
Further, we can construct a dependency graph $L_n$ with the vertex set ${\cal B}_n$ for 
the random variables $\{X_{n\alpha}:\alpha\in {\cal B}_n\}$ as follows.
In this context, all the $(m+1)$-stars with $1\le m\le k$ in $\mathbb{K}_n$, the complete graph with vertex set $\{1,2,\cdots,n\}$, 
are now regarded as the new ``vertices" in $L_n$. Then the number of vertices in $L_n$ is $|{\cal B}_n|$, which is given by \eqref{calBn}.
In order to form the edge set of $L_n$,
for each pair of vertices $\alpha$ and $\beta$ in ${\cal B}_n$ we connect them by an edge, 
if their corresponding star graphs $\mathbb{S}_{n\alpha}$ and $\mathbb{S}_{n\beta}$ have at least one common edge in $\mathbb{K}_n$.

We now verify the condition in Lemma \ref{lemjanson}. 
By definition it is clear to see that
\begin{equation}\label{sumexalpha}
    \sum_{\alpha\in {\cal B}_n}\E [|X_{n\alpha}|]\le \sum_{m=1}^{k} \E[S_{m+1,n}].
\end{equation}
Then, by \eqref{esmnasymi} and \eqref{sumexalpha} we have that when $n$ is sufficiently large, 
\begin{equation}\label{mnorderi}
   \sum_{\alpha\in {\cal B}_n}\E [|X_{n\alpha}|]\le  M_n:=2n\sum_{m=1}^{k}\frac{c^m}{m!},
\end{equation}
which is of order $n$.
For any given $\alpha_1,\alpha_2 \in {\cal B}_n$, 
let $V_n=V_n(\alpha_1,\alpha_2)$ be the vertex set of the subgraph $\mathbb{S}_{n\alpha_1}\cup \mathbb{S}_{n\alpha_2}$. 
Since the size of $\mathbb{S}_{n\alpha}$ is at most $k+1$ for any $\alpha\in {\cal B}_n$,
the number of vertices in $V_n$ does not exceed $2(k+1)$. 
Recall the definition of the closed neighborhood of given vertices in a dependency graph, as given in \eqref{closedn}.
Then $\overline N_{L_n}(\alpha_1,\alpha_2)$, which is a subset of ${\cal B}_n$, represents
all the star graphs of size at most $k+1$ in $\mathbb{K}_n$ that share at least one edge with either $\mathbb{S}_{n\alpha_1}$ or $\mathbb{S}_{n\alpha_2}$. 
For any $\alpha\in \overline N_{L_n}(\alpha_1,\alpha_2)$, 
consider the number of vertices in $\mathbb{S}_{n\alpha}$ that are not in $V_n$. Denote
\begin{equation*}
    \overline N_s=\big\{\alpha\in \overline N_{L_n}(\alpha_1,\alpha_2): \text{ $\mathbb{S}_{n\alpha}$ has exactly $s$ vertices out of $V_n$}\big\},
\end{equation*}
for $s=0,1,\dots,k-1$. Then $\bigcup_{s=0}^{k-1}\overline N_s=\overline N_{L_n}(\alpha_1,\alpha_2)$, and 
for any $0\le s\le k-1$ there exists a sufficiently large number $C_k$,
which does not depend on $n$ but on $k$,
such that the number of star graphs in $\overline N_s$ is not greater than $C_k\binom{n-2}{s}$.
Since for any $\alpha\in\overline N_s$,
\begin{equation*}
    \E \big[|X_{n\alpha}|\big|X_{n\alpha_1},X_{n\alpha_2}\big]\le p^s,
\end{equation*}
we have
\begin{equation}\label{sumexalphacond}
    \sum_{\alpha\in \overline N_s}\E\big[|X_{n\alpha}|\big|X_{n\alpha_1},X_{n\alpha_2}\big]
    \le C_k\binom{n-2}{s}p^s, \quad  0\le s\le k-1,
\end{equation}
for sufficiently large $n$.
Then, by \eqref{sumexalphacond} we can set $Q_n$ to be 
\begin{equation}\label{qnorderi}
Q_n=2C_k\sum_{s=0}^{k-1}\frac{c^s}{s!},
\end{equation}
which is a constant, regardless of $n$. 
Recall $\lambda_{\min}>0$ is the smallest eigenvalue of the limiting covariance matrix
$\bm{\Sigma}_k^{\star}$ given in \eqref{limcovmat}.
 Then it follows that for sufficiently large $n$, 
\begin{equation*}
    \V \bigg(\frac{T_{k,n}}{\sqrt{n}}\bigg)
    =\bm{a}^{\top}\cov\Big(\frac{\bm{S}_{k,n}}{\sqrt{n}}\Big)\bm{a}\ge \lambda_{\min}(1+o(1)),
\end{equation*}
where $\bm{a}$ denotes the vector $(a_1,a_2,\cdots,a_k)^{\top}$.
This implies that the variance of $T_{k,n}$ is of order $n$.
Collecting the orders of magnitude of $M_n$ in \eqref{mnorderi} and $Q_n$ in \eqref{qnorderi}, 
we thus have 
\begin{equation*}
    \frac{M_nQ_n^2}{(\V [T_{k,n}])^{3/2}}=O\Big(\frac1{\sqrt n}\Big)\to 0.
\end{equation*}
This proves \eqref{tmnindist} by Lemma \ref{lemjanson}, and thus (i) holds.

Next we prove (ii). 
If $np\rightarrow \infty$ and $n^{2}(1-p)^3\rightarrow \infty$,
by \eqref{covsm1nsl1nasym} and \eqref{varsmn} we have that
\begin{align}\label{covsasymnpinf}
\cov(S_{m+1,n},S_{l+1,n})=\frac{2}{(m-1)!(l-1)!}n^{m+l}p^{m+l-1}(1-p)\Big(1+O\Big(\frac1{np}\Big)\Big), \quad m,l\ge1,
\end{align}
and 
\begin{align}\label{varsasymnpinf}
\V[S_{m+1,n}]=\frac{2}{[(m-1)!]^2}n^{2m}p^{2m-1}(1-p)\Big(1+O\Big(\frac1{np}\Big)\Big), \quad m\ge1.
\end{align}
Denote
\begin{align*}
    S_{m+1,n}':=\frac{S_{m+1,n}}{\sqrt{n^{2m}p^{2m-1}(1-p)}}, \quad 1\le m\le k.
\end{align*}
Then the linear combination 
\begin{align*}
T_{k,n}':=\sum_{m=1}^ka_mS_{m+1,n}'=\sum_{\alpha\in {\cal B}_n}X_{n\alpha}',
\end{align*}
where $a_1, a_2,\cdots,a_k$ are arbitrary real numbers such that $\sum_{m=1}^k a_m^2=1$, and
\begin{align*}
    X'_{n\alpha}=\frac{X_{n\alpha}}{\sqrt{n^{2s_{n\alpha}-2}p^{2s_{n\alpha}-3}(1-p)}},\quad  \alpha \in {\cal B}_n,
\end{align*}
with random variables $X_{n\alpha}$ given by \eqref{xnadef}.

By \eqref{varsasymnpinf}, we have that $\V [T'_{k,n}]$ is of order 1.
It follows by \eqref{esm1nasym} that we can set
\begin{align*}
M_n'&:=\sum_{m=1}^{k} \E[S_{m+1,n}']=O\Big(n\sqrt{\frac{p}{1-p}}\Big).
\end{align*}
For any $1\le s\le k-1$, it follows by \eqref{sumexalphacond} that 
\begin{align*}
\sum_{\alpha\in \overline N_s}\E\big[|X_{n\alpha}'|\big|X_{n\alpha_1}',X_{n\alpha_2}'\big]
& = \sqrt{\frac{p}{1-p}}\sum_{\alpha\in \overline N_s}\frac{\E\big[|X_{n\alpha}|\big|X_{n\alpha_1},X_{n\alpha_2}\big]}{(np)^{s_{n\alpha}-1}}\\
& \le \frac{1}{(np)^{s+1}}\sqrt{\frac{p}{1-p}}\sum_{\alpha\in \overline N_s}\E\big[|X_{n\alpha}|\big|X_{n\alpha_1},X_{n\alpha_2}\big]\\
& =O\Big(\frac{1}{n\sqrt{p(1-p)}}\Big).
\end{align*}
Analogously to \eqref{qnorderi}, for every $\alpha_1,\alpha_2 \in {\cal B}_n$, then there exists a constant $Q_n'$ such that
\begin{align*}
\sum_{\alpha\in \overline N_L(\alpha_1,\alpha_2)}\E\big[|X_{n\alpha}'|\big|X_{n\alpha_1}',X_{n\alpha_2}'\big]=
\sum_{s=0}^{k-1}\sum_{\alpha\in \overline N_s}\E\big[|X_{n\alpha}'|\big|X_{n\alpha_1}',X_{n\alpha_2}'\big]\le Q_n'
=O\Big(\frac{1}{n\sqrt{p(1-p)}}\Big).
\end{align*}
Thus, we have 
\begin{equation*}
    \frac{M_n'(Q_n')^2}{(\V [T_{k,n}'])^{3/2}}=O\bigg(\frac{1}{\sqrt{n^{2}p(1-p)^3}}\bigg).
\end{equation*}
Note that now the correlation coefficient of $S_{m+1,n}$  and $S_{l+1,n}$ tends to 1 whenever $np\to\infty$, 
by \eqref{covsasymnpinf} and \eqref{varsasymnpinf}. 
Since $np\rightarrow \infty$ and $n^{2}(1-p)^3\rightarrow \infty$ imply $n^{2}p(1-p)^3\to \infty$,
it follows by Lemma \ref{lemjanson} that (ii) holds, and completes the proof of Theorem \ref{thmstar}.
\end{proof}

For any given integer $k\ge2$, Theorem \ref{thmstar} gives us a tool to study the joint asymptotic normality 
of the first $k$ generalized Zagreb indices, as $n\to\infty$. 
Let $\bm{Z}_{k,n}$ denote the random vector $(Z_n^{(1)}, Z_n^{(2)},\cdots,Z_n^{(k)})^{\top}$.
It thus follows from Proposition \ref{propstar} that
\begin{align*}
\bm{Z}_{k,n}={\bm A}_k\bm{S}_{k,n},
\end{align*}
where ${\bm A}_k=(a_{ml})_{k\times k}$ is a lower triangular matrix,
which does not depend on $n$, with 
\begin{align*}
a_{ml}=\left\{\begin{array}{cc}
   l!\stirling{m}{l},  &   m\ge l; \\[2pt]
    0,  &  m<l.
\end{array}\right.
\end{align*}
Therefore, under the same conditions in Theorem \ref{thmstar}, 
the random vector $\bm{Z}_{k,n}$ has asymptotic normality
when suitably normalized. 

\begin{theorem}\label{thmzagreb}
For any $m\ge 1$, let $Z^{(m)}_n$ be the $m$-th order generalized Zagreb index of $G(n, p)$.
For any given integer $k\ge 2$, as $n\rightarrow \infty$, the following assertions hold.
\begin{enumerate}[(i)]
\item If $np\rightarrow c$ for some constant $c > 0$, then
\begin{equation*}
    \frac{1}{\sqrt{n}}\big(Z^{(1)}_n-\E[Z^{(1)}_n],Z^{(2)}_n-\E[Z^{(2)}_n], \cdots, Z^{(k)}_n-\E[Z^{(k)}_n]
    \big)^{\top}\indist N(\bm{0},\bm{\Sigma}_k),
\end{equation*}
where the covariance matrix $\bm{\Sigma}_k=(\sigma_{ml})_{k\times k}$ is given by
\begin{equation*}
    \sigma_{ml}=\sum_{i=1}^{m}\sum_{j=1}^{l}\stirling{m}{i}\stirling{l}{j}
    \bigg[ij c^{i+j-1}+\sum_{s=1}^{\min\{i,j\}}\frac{i!\,j!\, c^{i+j-s}}{s!(i-s)!(j-s)!}\bigg], \quad 1\le m,l\le k.
\end{equation*}

\item If $np\rightarrow \infty$ and $n^2 (1-p)^3\rightarrow \infty$, then,
\begin{equation*}
    \sqrt{\frac{p}{2(1-p)}}
    \bigg(
        \frac{Z^{(1)}_n-\E[Z^{(1)}_n]}{np},\frac{Z^{(2)}_n-\E[Z^{(2)}_n]}{2(np)^2},\cdots,\frac{Z^{(k)}_n-\E[Z^{(k)}_n]}{k(np)^k}
    \bigg)^{\top}\indist N\big(\bm{0},\bm{U}_{k}\big).
\end{equation*}

\item If $np\rightarrow 0$ and $n^{2}p\rightarrow \infty$, then
\begin{equation*}
    \frac{1}{\sqrt{2n^2p}}\bigg(Z^{(1)}_n-n^2p,Z^{(2)}_n-n^3p^2-n^2p,\cdots,Z^{(k)}_n-\sum_{m=1}^k 
    \stirling{k}{m}n^{m+1}p^m\bigg)^{\top}\indist N\big(\bm{0},\bm{U}_{k}\big).
\end{equation*}
\end{enumerate}
\end{theorem}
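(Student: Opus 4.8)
The plan is to leverage the exact linear identity $\bm{Z}_{k,n} = \bm{A}_k \bm{S}_{k,n}$ recorded just above the statement, in which $\bm{A}_k$ is the fixed, invertible, lower-triangular matrix with entries $a_{ml}=l!\stirling{m}{l}$ for $m\ge l$. Because a continuous linear map carries weak convergence to weak convergence, and because $\E[\bm{Z}_{k,n}]=\bm{A}_k\E[\bm{S}_{k,n}]$ holds \emph{exactly}, so that $\bm{Z}_{k,n}-\E[\bm{Z}_{k,n}]=\bm{A}_k(\bm{S}_{k,n}-\E[\bm{S}_{k,n}])$, parts (i) and (ii) will follow from Theorem \ref{thmstar} once the effect of $\bm{A}_k$ (and of the diagonal rescalings) on the limiting covariance is identified. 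Part (iii) lies outside the scope of Theorem \ref{thmstar} and needs a direct argument. For (i), Theorem \ref{thmstar}(i) gives $n^{-1/2}(\bm{S}_{k,n}-\E[\bm{S}_{k,n}])\indist N(\bm{0},\bm{\Sigma}_k^{\star})$, so multiplying by $\bm{A}_k$ yields $n^{-1/2}(\bm{Z}_{k,n}-\E[\bm{Z}_{k,n}])\indist N(\bm{0},\bm{A}_k\bm{\Sigma}_k^{\star}\bm{A}_k^{\top})$; it then remains only to check that $\bm{A}_k\bm{\Sigma}_k^{\star}\bm{A}_k^{\top}=\bm{\Sigma}_k$, which is mechanical — writing the $(m,l)$-entry as $\sum_{i=1}^{m}\sum_{j=1}^{l} i!\stirling{m}{i}\,\sigma_{ij}^{\star}\,j!\stirling{l}{j}$ and inserting the formula for $\sigma_{ij}^{\star}$, the factors $i!$ and $j!$ cancel against the denominators $(i-1)!(j-1)!$ and $s!(i-s)!(j-s)!$, producing exactly the stated $\sigma_{ml}$.

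For (ii): Theorem \ref{thmstar}(ii) reads $\sqrt{p/(2(1-p))}\,\bm{D}_n^{-1}(\bm{S}_{k,n}-\E[\bm{S}_{k,n}])\indist N(\bm{0},\bm{U}_k)$, where $\bm{D}_n$ is the diagonal matrix with $m$-th entry $(np)^m/(m-1)!$; let $\bm{E}_n$ be the diagonal matrix with $m$-th entry $m(np)^m$, the normalization appearing in Theorem \ref{thmzagreb}(ii). A short computation gives $(\bm{E}_n^{-1}\bm{A}_k\bm{D}_n)_{ml}=(l/m)\stirling{m}{l}(np)^{l-m}$ for $m\ge l$ and $0$ otherwise; since $np\to\infty$ and $\stirling{m}{l}=0$ for $l>m$, every off-diagonal entry tends to $0$ and every diagonal entry equals $1$, so $\bm{E}_n^{-1}\bm{A}_k\bm{D}_n\to\bm{I}_k$. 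Writing $\sqrt{p/(2(1-p))}\,\bm{E}_n^{-1}(\bm{Z}_{k,n}-\E[\bm{Z}_{k,n}])=(\bm{E}_n^{-1}\bm{A}_k\bm{D}_n)\cdot\big(\sqrt{p/(2(1-p))}\,\bm{D}_n^{-1}(\bm{S}_{k,n}-\E[\bm{S}_{k,n}])\big)$ and invoking Slutsky's theorem gives the limit $N(\bm{0},\bm{I}_k\bm{U}_k\bm{I}_k^{\top})=N(\bm{0},\bm{U}_k)$.

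For (iii): here $np\to0$ while $n^2p\to\infty$. Reinspecting \eqref{esm1nasym} and \eqref{varsmn} in this sparse regime — where, for $m\ge2$, the dominant contribution to the variance is the term carrying the lowest power of $np$ — one obtains $\V[S_{m+1,n}]\sim n^{m+1}p^m/m!=o(n^2p)$ and $\E[S_{m+1,n}]-n^{m+1}p^m/m!=O(n^mp^m)=o(\sqrt{n^2p})$ for each $m\ge2$, so that $(S_{m+1,n}-n^{m+1}p^m/m!)/\sqrt{2n^2p}\inprob0$ by Chebyshev's inequality. Expanding $Z_n^{(j)}-\sum_{m=1}^{j}\stirling{j}{m}n^{m+1}p^m=\sum_{m=1}^{j}m!\stirling{j}{m}\big(S_{m+1,n}-n^{m+1}p^m/m!\big)$ and isolating the $m=1$ term (coefficient $1!\stirling{j}{1}=1$) shows $\big(Z_n^{(j)}-\sum_{m=1}^{j}\stirling{j}{m}n^{m+1}p^m\big)/\sqrt{2n^2p}=(S_{2,n}-n^2p)/\sqrt{2n^2p}+o_P(1)$ for every $1\le j\le k$. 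Finally $S_{2,n}/2\sim{\rm Bin}\big(\binom{n}{2},p\big)$ with $\binom{n}{2}p\to\infty$ and $\binom{n}{2}(1-p)\to\infty$ (the latter automatic since $p\to0$), so the de Moivre--Laplace theorem together with $\E[S_{2,n}]-n^2p=-np=o(\sqrt{n^2p})$ yields $(S_{2,n}-n^2p)/\sqrt{2n^2p}\indist N(0,1)$; hence the normalized vector converges in distribution to $W(1,\dots,1)^{\top}$ with $W\sim N(0,1)$, i.e. to $N(\bm{0},\bm{U}_k)$.

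\textbf{Main obstacle.} The only genuinely delicate point is part (iii): Theorem \ref{thmstar} does not apply when $np\to0$, so one must return to the Section 3 moment asymptotics and re-derive the correct leading orders of $\E[S_{m+1,n}]$ and $\V[S_{m+1,n}]$ in this regime (the dominant variance term switching to the lowest power of $np$), and then confirm that every star count beyond $S_{2,n}$, as well as the gap between $\E[S_{2,n}]$ and its leading term $n^2p$, is negligible at the scale $\sqrt{n^2p}$. By contrast, the identity $\bm{A}_k\bm{\Sigma}_k^{\star}\bm{A}_k^{\top}=\bm{\Sigma}_k$ in (i) and the limit $\bm{E}_n^{-1}\bm{A}_k\bm{D}_n\to\bm{I}_k$ in (ii) are routine, if slightly tedious, linear algebra.
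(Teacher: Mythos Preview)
Your proposal is correct and follows essentially the same approach as the paper: parts (i) and (ii) are obtained from Theorem~\ref{thmstar} via the linear transformation $\bm{A}_k$ (the paper identifies the limiting covariance in (ii) by checking that the pairwise correlations of the rescaled $Z_n^{(m)}$ tend to $1$, rather than your matrix limit $\bm{E}_n^{-1}\bm{A}_k\bm{D}_n\to\bm{I}_k$, but the substance is identical), and part (iii) reduces to the CLT for the edge count $S_{2,n}$ after showing that the higher star counts are negligible at scale $\sqrt{n^2p}$. Your treatment of (iii) is in fact slightly more explicit than the paper's, which centers at $\E[Z_n^{(k)}]$ and does not spell out that the gap to the stated centering $\sum_{m}\stirling{k}{m}n^{m+1}p^m$ is $o(\sqrt{n^2p})$.
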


\begin{proof}
Similarly to Proposition \ref{propmeanvarznk}, for any integers $m,l\ge1$,
after straightforward calculations we can first obtain that the asymptotic covariance
of $Z^{(m)}_n$ and $Z^{(l)}_n$ is given by 
\begin{align}\label{covzk1zk2asymp}
\cov \big(Z^{(m)}_n,Z^{(l)}_n\big)&=n\sum_{i=1}^{m}\sum_{j=1}^{l}\stirling{m}{i}\stirling{l}{j}
\bigg[ij(np)^{i+j-1}(1-p) \notag\\
&\quad +\sum_{s=1}^{\min\{i,j\}}
 \frac{i!\,j!}{s!(i-s)!(j-s)!}(np)^{i+j-s}(1-p^s) \bigg]\Big(1+O\Big(\frac1n\Big)\Big). 
\end{align}

We first consider (i). Since Theorem \ref{thmstar}, together with Proposition \ref{propstar}, guarantees the 
asymptotic normality of the $k$-dimensional random vector $(\bm{Z}_{k,n}-\E [\bm{Z}_{k,n}])/\sqrt{n}$,
by Slutsky's theorem it is sufficient to check its asymptotic covariance matrix. 
Thus, (i) follows directly by \eqref{covzk1zk2asymp}.

The proof of (ii) is similar to that of (i). By \eqref{vzkasymp}, we have that $\V[Z^{(k)}_n]\sim 2k^2n^{2k}p^{2k-1}(1-p)$
for any $k\ge1$. It thus follows by \eqref{covzk1zk2asymp} that for any $1\le m\le l\le k$, the correlation coefficient of 
$(Z^{(m)}_n-\E[Z^{(m)}_n])/[m(np)^m]$ and $(Z^{(l)}_n-\E[Z^{(l)}_n])/[l(np)^l]$ tends to 1,
as $n\to\infty$. Then (ii) holds.

We next prove (iii).
If $np\rightarrow 0$ and $n^{2}p\rightarrow \infty$,
by \eqref{esm1nasym} and \eqref{varsmn} we now have that $\E[S_{m+1,n}]\sim {n^{m+1}p^m}/{m!}$ for $m\ge1$, and 
\begin{align*}
\V[S_{2,n}]\sim 2n^{2}p, \quad \V[S_{m+1,n}]\sim \frac{n^{m+1}p^m}{m!}, \quad m\ge2,
\end{align*}
which implies that for any $m\ge2$,
\begin{align*}
\E[S_{m+1,n}]=o(\E[S_{2,n}]), \quad \V[S_{m+1,n}]=o(\V[S_{2,n}]).
\end{align*}
Using Chebyshev's inequality, it thus follows from Proposition \ref{propstar} that for any $k\ge2$
\begin{align}\label{zknms2n}
\frac{(Z^{(k)}_n-\E [Z^{(k)}_n])-(S_{2,n}-\E[S_{2,n}])}{\sqrt{\V[S_{2,n}]}}
=\frac{\sum_{m=2}^km!\stirling{k}{m}(S_{m+1,n}-\E[S_{m+1,n}])}{\sqrt{\V[S_{2,n}]}}\inprob 0.
\end{align}
This says that the quantity $S_{2,n}$ makes a dominant contribution 
to the $k$-th order generalized Zagreb index for any $k\ge1$, if $np\rightarrow 0$ and $n^{2}p\rightarrow \infty$. 
On the other hand, applying Theorem 6.5 in \cite{janson2000rg} we have
\begin{align*}
\frac{S_{2,n}-\E[S_{2,n}]}{\sqrt{\V[S_{2,n}]}}\indist N(0,1),
\end{align*}
which, together with \eqref{zknms2n}, implies that (iii) holds.
\end{proof}

In particular, for $k=3$, an immediate consequence of Theorem \ref{thmzagreb} is that if $np\rightarrow c$ for some constant $c > 0$,
we have
\begin{equation*}
    \frac{1}{\sqrt{n}}\big(
        Z^{(1)}_n-c n,Z^{(2)}_n-(c^2+c) n, \cdots, Z^{(3)}_n-(c^3+3c^2+c) n
    \big)^{\top}\indist N(\bm{0},\bm{\Sigma}_3),
\end{equation*}
where the covariance matrix 
\begin{equation*}
\bm{\Sigma}_3=2c
\begin{pmatrix}
1 & 2c+1& 3c^2+6c+1\\
2c+1 & 4c^2+5c+1 & 6c^3+18c^2+11c+1\\
3c^2+6c+1 & 6c^3+18c^2+11c+1 & 9c^4+45c^3+63c^2+21c+1
\end{pmatrix}.
\end{equation*}

The next theorem, which is, in fact, a generalization of Theorem 3 in \cite{feng2013zagreb},
 establishes a necessary and sufficient condition for the asymptotic normality of the
generalized Zagreb indices of $G(n, p)$, as $n\rightarrow \infty$.

\begin{theorem}\label{nesucond}
For any given integer $k\ge 1$, let $Z^{(k)}_n$ be the $k$-th order generalized Zagreb index of a random graph $G(n, p)$. 
If $n^2p(1-p)\rightarrow\infty$, then
\begin{equation}\label{zknindist}
    \frac{Z^{(k)}_{n}-\sum_{m=1}^k\stirling{k}{m}\frac{n!}{(n-m-1)!}p^m}
    {\sqrt{n\sum\limits_{m=1}^k\sum\limits_{l=1}^k\stirling{k}{m}\stirling{k}{l}
\Big(ml(np)^{m+l-1}(1-p) 
+\sum\limits_{s=1}^{\min\{m,l\}}\frac{m!\,l!(np)^{m+l-s}(1-p^s)}{s!(m-s)!(l-s)!}\Big)}}
    \indist N(0,1).
\end{equation}
Conversely, if $(Z^{(k)}_n - a_n)/b_n \indist N(0, 1)$ for some constants $a_n$ and $b_n$, then $n^2p(1-p)\rightarrow\infty$.
\end{theorem}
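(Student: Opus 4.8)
The plan is to prove the two directions separately; in both, the behaviour of $p=p(n)$ is controlled by passing to subsequences along which $np$ and $n(1-p)$ converge in $[0,\infty]$. For \emph{sufficiency}, note first that by \eqref{ezk} the numerator in \eqref{zknindist} is precisely $Z^{(k)}_n-\E[Z^{(k)}_n]$, and comparing the denominator with \eqref{vzkasymp}--\eqref{covzk1zk2asymp} shows it equals $\sqrt{\V[Z^{(k)}_n]}\,(1+O(1/n))$; hence \eqref{zknindist} is equivalent to $(Z^{(k)}_n-\E[Z^{(k)}_n])/\sqrt{\V[Z^{(k)}_n]}\indist N(0,1)$. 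By the subsequence criterion for weak convergence it suffices to verify this along a further subsequence of any given subsequence. Pass to a subsequence on which $np\to\gamma\in[0,\infty]$ and $n(1-p)\to\delta\in[0,\infty]$; since $n^2p(1-p)\to\infty$, at least one of $\gamma,\delta$ equals $\infty$. If $\gamma\in(0,\infty)$ (so $p\to0$), Theorem \ref{thmzagreb}(i) and its $k$-th marginal give the claim; if $\gamma=0$ then $n^2p\to\infty$ and Theorem \ref{thmzagreb}(iii) applies; if $\gamma=\infty$ and, after a further passage, $n^2(1-p)^3\to\infty$, then Theorem \ref{thmzagreb}(ii) applies. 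In each case the centering coincides with $\E[Z^{(k)}_n]$ (up to $o(\sqrt{\V[Z^{(k)}_n]})$ in the third case) and the normalizing sequence is asymptotic to $\sqrt{\V[Z^{(k)}_n]}$ by \eqref{vzkasymp}, so Slutsky's theorem concludes these cases.

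The remaining regime is $\gamma=\infty$ with $n^2(1-p)^3\to L<\infty$, so that $p\to1$ while $n^2(1-p)\to\infty$; here we work with the complement graph $\overline G(n,p)\eqlaw G(n,1-p)$, whose edge probability $\overline p:=1-p$ satisfies $n^2(1-\overline p)^3=n^2p^3\to\infty$ and $n^2\overline p(1-\overline p)\to\infty$. Passing further so that $n\overline p\to\eta\in[0,\infty]$: if $\eta\in(0,\infty)$ Theorem \ref{thmzagreb}(i) applies to $\overline G$, if $\eta=0$ (whence $n^2\overline p=n^2(1-p)\to\infty$) Theorem \ref{thmzagreb}(iii), and if $\eta=\infty$ Theorem \ref{thmzagreb}(ii), each yielding joint asymptotic normality of $\overline Z^{(1)}_n,\dots,\overline Z^{(k)}_n$ after centering and scaling. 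By \eqref{znkcomp}--\eqref{idencomb},
\begin{equation*}
Z^{(k)}_n-\E[Z^{(k)}_n]=\sum_{m=1}^{k}(-1)^m\binom{k}{m}(n-1)^{k-m}\big(\overline Z^{(m)}_n-\E[\overline Z^{(m)}_n]\big),
\end{equation*}
a linear combination with deterministic coefficients of jointly asymptotically normal variables; since $\overline p\to0$, the variance asymptotics of the $\overline Z^{(m)}_n$ obtained from \eqref{vzkasymp} show that the standard deviation of the $m=1$ summand dominates all the others, so the right-hand side is asymptotically Gaussian with variance $\sim k^2n^{2k-2}\V[\overline Z^{(1)}_n]\sim 2k^2n^{2k}p(1-p)\sim\V[Z^{(k)}_n]$, the last step again by \eqref{vzkasymp}. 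This gives the standardized limit law in the last case and completes the proof of sufficiency.

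For \emph{necessity}, suppose $(Z^{(k)}_n-a_n)/b_n\indist N(0,1)$ and, towards a contradiction, $n^2p(1-p)\not\to\infty$; pass to a subsequence with $n^2p(1-p)\to M<\infty$, and then to a further one on which (since $p$ bounded away from $0$ and $1$ would force $n^2p(1-p)\to\infty$) either $p\to0$ or $p\to1$. If $p\to0$ then $n^2p\to M$: for $M=0$, $G(n,p)$ is empty with probability tending to $1$, so $(Z^{(k)}_n-a_n)/b_n$ is deterministic with probability tending to $1$; for $M>0$, Proposition \ref{propweakcon}(ii) gives $Z^{(k)}_n\indist 2\,{\rm Poi}(M/2)$. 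Symmetrically, if $p\to1$ then $n^2(1-p)\to M$: for $M=0$, Proposition \ref{propweakcon}(iv) gives $\Prob(Z^{(k)}_n=n(n-1)^k)\to1$; for $M>0$, Proposition \ref{propweakcon}(v) shows $Y_n:=(n(n-1)^k-Z^{(k)}_n)/(2kn^{k-1})\indist{\rm Poi}(M/2)$ and $(Z^{(k)}_n-a_n)/b_n=(Y_n-a_n')/b_n'$ for suitable deterministic $a_n',b_n'$. In every sub-case we have reduced to a sequence that is either eventually deterministic (up to vanishing probability) or converges in distribution to a non-degenerate discrete law, and an elementary trichotomy on the normalizers shows no affine rescaling of such a sequence can converge to $N(0,1)$: along a subsequence where the normalizer tends to $\infty$ the rescaled variable has only degenerate subsequential limits; where it tends to $0$ the discrete limit would be forced (via Slutsky) to be degenerate; and where it stays bounded and bounded away from $0$ the limit is a non-Gaussian affine image of the discrete law. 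This contradiction establishes necessity.

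The main obstacle is the sufficiency in the regime $np\to\infty$, $n^2(1-p)\to\infty$ but $n^2(1-p)^3\not\to\infty$: it falls outside the direct scope of Theorem \ref{thmzagreb} and is handled by the complement-graph reduction, where the real work lies in identifying via \eqref{znkcomp} the dominant ($m=1$) fluctuation term and checking that its variance matches $\V[Z^{(k)}_n]$ in each of the sub-cases $n\overline p\to\eta\in\{0\}\cup(0,\infty)\cup\{\infty\}$. On the necessity side the only delicate point is the $p\to1$, $n^2(1-p)\to\lambda$ subcase, in which $Z^{(k)}_n$ itself is not tight and one must first pass to the tight variable $Y_n$ before invoking the affine-rigidity argument.
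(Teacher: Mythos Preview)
Your proof is correct and covers all cases, but the route you take in the critical sufficiency regime differs genuinely from the paper's.

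For sufficiency in the ``easy'' regimes ($np\to c\in[0,\infty)$, or $np\to\infty$ with $n^2(1-p)^3\to\infty$), you invoke the marginals of Theorem~\ref{thmzagreb} directly; the paper instead re-runs the dependency-graph argument (Lemma~\ref{lemjanson}) on $Z^{(k)}_n=\sum_{\alpha}Y_{n\alpha}$ under the single hypothesis $n^2p(1-p)^3\to\infty$. Your shortcut is legitimate and avoids repetition.

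The real divergence is in the remaining regime $np\to\infty$, $n^2(1-p)\to\infty$, $n^2(1-p)^3\not\to\infty$. The paper stays in $G(n,p)$ and uses the star decomposition of Proposition~\ref{propstar}: since $np\to\infty$, \eqref{varsasymnpinf} gives $\V[S_{m+1,n}]=o(\V[S_{k+1,n}])$ for $m<k$, so the top star count $S_{k+1,n}$ dominates; its individual asymptotic normality follows from Ruci\'nski's criterion (Theorem~6.5 in \cite{janson2000rg}), and Slutsky finishes. You instead pass to the complement $\overline G(n,p)$, use the identity \eqref{znkcomp} to write $Z^{(k)}_n-\E[Z^{(k)}_n]$ as a deterministic linear combination of the centered $\overline Z^{(m)}_n$, and observe that the $m=1$ term---essentially the centred edge count in $\overline G$---dominates. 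Both arguments have the same skeleton (identify a dominant summand with a known CLT, kill the rest by Chebyshev), but they pick different dominant objects: the paper's $S_{k+1,n}$ versus your $\overline Z^{(1)}_n$. Your choice has the minor advantage that the edge-count CLT is elementary (a sum of i.i.d.\ Bernoullis) rather than an instance of the subgraph-count CLT; the paper's choice keeps everything inside the star framework and avoids the extra bookkeeping of the complement sub-cases $n\overline p\to\eta\in\{0\}\cup(0,\infty)\cup\{\infty\}$. Note, incidentally, that your invocation of \emph{joint} normality of $\overline Z^{(1)}_n,\dots,\overline Z^{(k)}_n$ is more than you need: Chebyshev alone dispatches the $m\ge 2$ terms, and only the marginal CLT for $\overline Z^{(1)}_n$ is actually used.

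For necessity, your argument and the paper's are the same in spirit (subsequence plus Proposition~\ref{propweakcon}); you spell out the affine-rigidity step that the paper leaves implicit.
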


{\bf Remark~} It is straightforward to see that the condition that $n^2p(1-p)\rightarrow\infty$ 
is equivalent to that $n^2p\to\infty$ and $n^2(1-p)\to \infty$,
while the quantities $n^2p/2$ and $n^2(1-p)/2$ are the asymptotic expected numbers of edges in $G(n,p)$ 
and the complement $\overline{G}(n, p)$, respectively. 
Then Theorem \ref{nesucond} states that for any $k\ge1$, the $k$-th order generalized Zagreb index of $G(n,p)$
has the asymptotic normality if $G(n,p)$ is not too close to either an empty graph or a complete graph. 
This also indicates that for the ultra-dense ER random graphs,
there remains a small gap in Theorem \ref{thmzagreb} that needs to be filled:
If the parameter $p$ satisfies that $n^2 (1-p)\to \infty$ but $n^2 (1-p)^3\to c$ for some constant $c\ge0$,
the single generalized Zagreb index $Z_n^{(m)}$ has asymptotic normality for any $1\le m\le k$, 
but it is unknown whether these random variables have the joint asymptotic normality. 

\begin{proof}[Proof of Theorem \ref{nesucond}] 
Here we employ a technique similar to that is used in \cite{feng2013zagreb}, and begin by proving the necessity part. 
If $n^2p(1-p)$ does not go to $\infty$,  then there exists a sequence $\{n_l, l = 1, 2,...\}$ 
such that $n_l^2p(1-p)\rightarrow c_1$ for some constant $c_1\in [0, \infty)$
with $p=p(n_l)\to 0$ or 1. Thus, we can conclude that $n_l^2p\to c_1$ or $n_l^2(1-p)\to c_1$. 
By Proposition \ref{propweakcon}, the subsequence $\{Z^{(k)}_{n_l} \}$ rules out asymptotic normality for any normalization,
and thus the necessity holds.

To prove the sufficiency part of Theorem \ref{nesucond},  we first note that \eqref{zknindist} is equivalent to 
\begin{equation}\label{zknan}
\frac{Z^{(k)}_{n}-\E[Z^{(k)}_{n}]}{\sqrt{\V[Z^{(k)}_{n}]}}\indist N(0,1),
\end{equation}
by \eqref{ezkasymp} and \eqref{vzkasymp}.
Also using Lemma \ref{lemjanson},
we next show that \eqref{zknan} holds under a stronger condition $n^2p(1-p)^3\rightarrow\infty$.
Similarly to the proof of Theorem \ref{thmstar}, it follows from Proposition \ref{propstar} that
\begin{align}\label{zknyna}
Z^{(k)}_{n}=\sum_{\alpha\in {\cal B}_n}(s_{n\alpha}-1)!\stirling{k}{s_{n\alpha}-1}I_{n\alpha}:=\sum_{\alpha\in {\cal B}_n}Y_{n\alpha},
\end{align}
where random variables $Y_{n\alpha}$ also have the same dependency graph $L_n$,
and $2\le s_{n\alpha}\le k+1$ for all $\alpha\in {\cal B}_n$.
To verify the condition of Lemma \ref{lemjanson} for $Z^{(k)}_{n}$ in the form of \eqref{zknyna}, 
by \eqref{ezkasymp} we can directly set 
\begin{align}\label{mn2}
M_n'':=2\sum_{m=1}^k \stirling{k}{m}n^{m+1}p^m=O\bigg(n^2p\Big(1+\sum_{m=1}^{k-1} (np)^m\Big)\bigg).
\end{align}
Suppose that two star graphs $\alpha_1,\alpha_2\in {\cal B}_n$ are given.  
Let $V_{12}$ be the vertex set of the induced subgraph $S_{n\alpha_1}\cup S_{n\alpha_1}$. 
Note that the cardinality of $V_{12}$ is at most $2(k+1)$, since both the sizes of $\alpha_1,\alpha_2$ are not greater than $k+1$.  
Consider the star graphs in ${\cal B}_n$ that share at least one edge with $\alpha_1$ or $\alpha_2$ and have exactly $s$ vertices out of $V_{12}$,
for $s=0, 1,\cdots,k-1$.  Recalling the constant $C_k$ defined for any $\alpha\in {\cal B}_n$ in the proof of (i) of Theorem \ref{thmstar}, 
here we can analogously define a constant $C_k'$ for any $\alpha_1,\alpha_2\in {\cal B}_n$.
Therefore, by \eqref{zknyna} we have 
\begin{align}\label{qn2}
    \sum_{\alpha\in \overline{N}_{L_n}}\E\big[|Y_{n\alpha}|\big|Y_{n\alpha_1},Y_{n\alpha_2}\big]
    &\le \max_{1\le s\le k}\Big(s!\stirling{k}{s}\Big)\cdot
    \sum_{\alpha\in \overline{N}_{L_n}}\E\big[I_{n\alpha}\big|I_{n\alpha_1},I_{n\alpha_2}\big]\notag\\  
    &\le \max_{1\le s\le k}\Big(s!\stirling{k}{s}\Big)\cdot C_k'\sum_{s=0}^{k-1}\binom{n}{s}p^s\notag\\
    &:=Q_n'',
\end{align}
which is of order $1+\sum_{s=1}^{k-1} (np)^s$. 
By \eqref{vzkasymp}, \eqref{mn2} and  \eqref{qn2}, we thus have
\begin{align*}
    \frac{M_n'' (Q_n'')^2}{(\V [Z^{(k)}_n])^{3/2}}&=O\bigg(\frac{n^2p(1+\sum_{s=1}^{k-1} (np)^s)^3}{\big[n^2 p(1-p)(1+\sum_{s=1}^{2(k-1)} (np)^s)\big]^{3/2}}\bigg)\\
    &=O\Big(\frac{1}{\sqrt{n^2p(1-p)^3}}\Big)\rightarrow 0.
\end{align*}
which implies that \eqref{zknan} holds, if $n^2p(1-p)^3\rightarrow\infty$.


Finally, we prove that \eqref{zknan} also holds under the desired condition 
$n^2p(1 - p)\rightarrow\infty$.
It is sufficient to consider the case where $n^2p(1-p)^3\rightarrow\infty$
does not hold. 
Consider any subsequence of positive integers $n_1<n_2<\cdots$
such that $n^2_l p(1-p)^3\to c_1$ for some constant $c_1\in[0,\infty)$,
as $n_l\to\infty$. 
Since $n_l^2 p(1-p)$ still tends to infinity, it follows that
\begin{equation*}
 \frac{n_l^2 p(1-p)^3}{n_l^2 p(1-p)}=(1-p)^2\rightarrow 0,
\end{equation*}
which implies that we must have $p\rightarrow 1$, and thus both $n_l p$ and $n_l^2 (1-p)$ tend to infinity.
Then, we now just need to prove that \eqref{zknan} holds under the condition that 
\begin{equation}\label{condnp}
np\to\infty,\quad n^2 (1-p)\to\infty,
\end{equation}
for simplicity omitting the subscripts $l$.
Under the condition \eqref{condnp}, it follows by \eqref{varsasymnpinf} that
\begin{align}\label{varsok} 
\V[S_{m+1,n}]=o\big(\V[S_{k+1,n}]\big), \quad m=1,2,\cdots,k-1, 
\end{align}
and further by Proposition \ref{propstar},  
\begin{align}\label{vtsims}
\V[Z^{(k)}_n]\sim (k!)^2\V[S_{k+1,n}].
\end{align}
Note that if $np\to\infty$, then $np^{m/(m+1)}\to \infty$ for any $m\ge1$.
Applying Theorem 6.5 in \cite{janson2000rg}, it follows that for any given integer number $m\ge1$,
\begin{align*}
\frac{S_{m+1,n}-\E[S_{m+1,n}]}{\sqrt{\V[S_{m+1,n}]}}\indist N(0,1).
\end{align*}
Therefore, we have that for any $k\ge1$,
\begin{align}\label{tknan}
\frac{Z^{(k)}_n-\E[Z^{(k)}_n]}{\sqrt{\V[Z^{(k)}_n]}}
&=\frac{\sum_{m=1}^{k} m!\stirling{k}{m} (S_{m+1,n}-\E[S_{m+1,n}])}{\sqrt{\V[Z^{(k)}_n]}}\notag\\
&=\frac{\sum_{m=1}^{k-1} m!\stirling{k}{m} (S_{m+1,n}-\E[S_{m+1,n}])}{\sqrt{\V\big[\sum_{m=1}^{k-1} m!\stirling{k}{m} S_{m+1,n}\big]}}\cdot
   \frac{\sqrt{\V\big[\sum_{m=1}^{k-1} m!\stirling{k}{m} S_{m+1,n}\big]}}{\sqrt{\V[Z^{(k)}_n]}}\notag\\
&\quad +\frac{k!(S_{k+1,n}-\E[S_{k+1,n}])}{k!\sqrt{\V[S_{k+1,n}]}}\cdot
  \frac{k!\sqrt{\V[S_{k+1,n}]}}{\sqrt{\V[Z^{(k)}_n]}}\notag\\
&\indist N(0,1),  
\end{align}
by \eqref{varsok}, \eqref{vtsims} and Slutsky's theorem.
This completes the proof of Theorem \ref{nesucond}.
\end{proof}

\section*{Acknowledgments}
We would like to thank the two anonymous referees for their valuable comments that have
improved the overall quality of the manuscript.

\end{document}